\theoremstyle{plain}
\newtheorem{theorem}{Theorem}[section]
\newtheorem{proposition}[theorem]{Proposition}
\newtheorem{corollary}[theorem]{Corollary}
\newtheorem{lemma}[theorem]{Lemma}
\theoremstyle{definition}
\newtheorem{remark}[theorem]{Remark}
\newtheorem{definition}[theorem]{Definition}
\newtheorem{assumption}[theorem]{Assumption}
\newcommand{\C}{\mathbb{C}}
\newcommand{\R}{\mathbb{R}}
\newcommand{\Z}{\mathbb{Z}}
\newcommand{\g}{\mathfrak g}
\newcommand{\h}{\mathfrak h}
\newcommand{\SF}{\mathscr F}
\newcommand{\CO}{\mathcal O}
\renewcommand{\tilde}{\widetilde}
\renewcommand{\setminus}{\smallsetminus}
\newcommand{\nin}{/\kern-2.1ex\in}
\def\<{\left\langle}
\def\>{\right\rangle}
\def\End{\operatorname{End}}
\def\ind{\operatorname{ind}}
\numberwithin{equation}{section}
\title[Torus fibrations and localization of index III]
{Torus fibrations and localization of index III \\ 
-Equivariant version and its applications-}
\author[H. Fujita]{Hajime Fujita}
\author[M. Furuta]{Mikio Furuta}
 \author[T. Yoshida]{Takahiko Yoshida}
\subjclass[2000]{} 
\keywords{}
\address{Department of Mathematical and Physical Sciences, Japan Womens's University, 
2-8-1 Mejirodai, Bunkyo-ku, Tokyo, 112-8681, Japan}
\email{fujitah@fc.jwu.ac.jp}
\address{Graduate School of Mathematical Sciences, The University of Tokyo, 
3-8-1 Komaba, Meguro-ku, Tokyo, 153-8914, Japan}
\email{furuta@ms.u-tokyo.ac.jp}
\address{Department of Mathematics, Graduate School of Science and Technology, Meiji University, 1-1-1 Higashimita, Tama-ku, Kawasaki, 214-8571, Japan}
\email{takahiko@meiji.ac.jp}
\begin{document}

\maketitle

\begin{abstract}
This paper is the third of the series concerning the localization of
the index of Dirac-type operators. In our previous papers we gave a
formulation of index of Dirac-type operators on  open manifolds under
some geometric setting, whose typical example was given by
the structure of a torus fiber bundle on the ends of the open manifolds. 
We introduce two equivariant versions of the localization.
As an application we give a proof of Guillemin-Sternberg's quantization 
conjecture in the case of torus action.
\end{abstract}


\section{Introduction}


This is the third of the series concerning a localization of the
index of elliptic operators.
The localization of integral is a mechanism by which
the integral of a differential form on a manifold 
becomes equal to the integral of another differential
form on a submanifold, which has been formulated
under various geometric settings.
The {\it submanifold} is either an open submanifold
or a closed submanifold. 
When it is an open submanifold,
the localization is closely related to some {\it excision 
formula}. When it is a closed submanifold,
the localization is usually obtained by applying 
some {\it product formula} to the normal bundle of
the submanifold after the localization 
to the open tubular neighborhood.


A typical geometric setting for such localization
is given by action of compact Lie group, and
a localization is formulated  in terms of 
the equivariant de Rham cohomology groups.
An example is Duistermaat and Heckman's formula on a symplectic manifold.
It is formally possible to replace the equivariant de Rham cohomology groups
with the equivariant $K$-cohomology groups, and the resulting localization
in terms of the equivariant $K$-cohomology groups is known as
Atiyah-Segal's Lefschetz formula of equivariant index.


In our previous papers \cite{Fujita-Furuta-Yoshida1,Fujita-Furuta-Yoshida2}
the geometric setting ensuring our localization is
typically given by the structure of a torus fiber bundle.
Under this setting we consider  the Riemann-Roch number  or the index of
the Dolbeault operator or the Dirac-type operator, 
associated to an almost complex structure or a spin{$^c$} structure,
which is twisted by some vector bundle.
We do not assume any global group action.
Instead, on the vector bundle, we assume 
a family of flat connections of the fibers of the torus bundle.
Our setting has generalized from the setting of a single torus bundle
structure to the case that we have a finite open covering and
a family of torus bundle structures on the open sets which
satisfy some compatibility condition.
The dimensions of the fibers of the family of torus bundle structures
can vary.
This generalization was necessary to formulate 
a product formula in a full form \cite{Fujita-Furuta-Yoshida2},
and the product formula is used to compute the local contribution in some examples.


In this paper we introduce the equivariant version of 
our localization.
When a compact Lie group $G$ acts on everything,
it is straightforward to generalize our previous argument.
The index takes values in the character ring $R(G)$
and we have the Riemann-Roch character.
We go further. Suppose two compact Lie group $G$ and $K$
acts on everything simultaneously and assume that
their actions are commutative.
In this paper we formulate another type of equivariant
version as follows.
The main assumption of our previous papers in our geometric setting
was the vanishing of the de Rham cohomology groups
with some local coefficients on each fiber of the torus bundles.
Our new setting is given by weaken the assumption.
Roughly speaking our new assumption is
that only the $G$-invariant part of the de Rham cohomology groups
vanish.
Under this new weaker assumption,
the full $G \times K$-equivariant index is not well defined.
Instead only the $G$-invariant part of the $G\times K$-equivariant index
is well defined as an element of the character ring of $K$.


As an application of the latter equivariant version
we give a proof of Guillemin-Sternberg's quantization conjecture
in the case of torus action.


Our localization is basically a purely topological statement.
It would be required to formulate it as the equality
between topological index and analytical index.
The definition of topological index is, however, not given at the present.


In this paper we work in the smooth category.
In Section~2 and Section~3 we describe the orbifold version of {\it compatible fibration } and {\it compatible system} in the previous papers \cite{Fujita-Furuta-Yoshida1,Fujita-Furuta-Yoshida2} and their equivariant versions. 
We give several definitions under the same names as in the previous papers,
though the notions are generalized as well as the propositions there.
We also introduce the notion of {\it $G$-acyclicity} of equivariant compatible system. 
In Section~4 we define the {\it local index} for acyclic compatible system, 
the {\it $K$-equivariant local index} for $K$-equivariant acyclic compatible system and {\it $G$-invariant local index} for $G$-acyclic compatible system. 
We also give 
one of our main theorem (Theorem~\ref{equivariant localization for invariant part}). 
We define the local index under the assumption that the compatible fibration is 
constructed from a torus action. 
Though the torus action is not essential to define the index, it is useful to avoid discussing technical conditions and enough to have an application to the quantization conjecture. 
In Section 5, we explain
the construction of the $K$-equivariant $G$-acyclic compatible system  using an action of a torus $G$ with a simultaneous action of a compact Lie group $K$
on an almost complex manifold.
In Section 6, as a preparation of the proof of quantization conjecture,
we show a vanishing property of the $G$-invariant local index when $G$ is the circle group $S^1$ under some condition.
In Section 7 we give a proof of quantization conjecture for torus action.

\section{Equivariant compatible fibration}

In this section we recall the notion of {\it compatible fibration} in \cite{Fujita-Furuta-Yoshida2}, and introduce its equivariant version. 
The definitions given in this section are generalized versions 
of those given in \cite{Fujita-Furuta-Yoshida2} so that 
we avoid dealing with the orbifold singularities directly which come from finite isotropies of a torus action.
\subsection{Compatible fibration}

Let $M$ be a smooth manifold. 
\begin{definition}\label{compatible fibration}
A {\it compatible fibration on $M$} is a collection of the data 
$\{V_{\alpha}, {\SF}_{\alpha}\}_{\alpha\in A}$ consisting of 
an open covering $\{V_{\alpha}\}_{\alpha\in A}$ of $M$ and 
a foliation $\SF_{\alpha}$ on $V_{\alpha}$ with compact leaves 
which  satisfies the following properties.
\begin{enumerate}
\item The holonomy group of each leaf of $\SF_\alpha$ is finite. 
\item\label{correspondence between foliation and pi_alpha}For each $\alpha$, $\beta \in A$, if a leaf $L\in \SF_\alpha$ has non-empty intersection $L\cap V_\beta\neq \emptyset$, then, $L\subset V_\beta$. 
\end{enumerate}
\end{definition}

Let $\{ V_\alpha ,\SF_\alpha\}_{\alpha \in A}$ be a compatible fibration on $M$. 

\begin{definition}\label{admissible subset}
A subset $C$ of $M$ is said to be {\it admissible} if, 
on each $V_\alpha\cap C\neq \emptyset$, $C$ contains all leaves $L\in \SF_\alpha$ which intersect with $C$. 
\end{definition}
For an admissible subset $C$ we define the foliation $\SF_\alpha|_C$ on $C\cap V_\alpha$ by
\[
\SF_\alpha|_C=\{ L\in \SF_\alpha \mid L\cap C\neq\emptyset \} .
\]
\begin{proposition}
Let $C$ be an admissible submanifold of $M$. Then, $\{ C\cap V_\alpha, \SF_\alpha|_C\}_\alpha$ is a compatible fibration on $C$. 
\end{proposition}

\begin{definition}
A function $f:M\to \R$ is said to be {\it admissible} if $f$ is constant along leaves of $\SF_\alpha$ for all $\alpha\in A$.
\end{definition}

\subsection{$K$-equivariant compatible fibration}

Let $K$ be a compact Lie group. We rigorously describe the assumption on a $K$-action since we deal with the orbifold setting. 
Suppose that there exists an action of $K$ on $M$ which preserves all these data. 
Let $\{ V_\alpha ,\SF_\alpha \}_{\alpha \in A}$ be a compatible fibration on $M$.

\begin{definition}\label{equivariant compatible fibration}
A compatible fibration $\{ V_\alpha ,\SF_\alpha \}_{\alpha \in A}$ is said to be {\it $K$-equivariant} if it satisfies the following conditions. 
\begin{enumerate}
\item The $K$-action preserves $V_\alpha$'s. 
\item The $K$-action preserves the foliation $\SF_\alpha$ on $V_\alpha$. We allow that the $K$-action sends a leaf to another leaf. 
\end{enumerate}
\end{definition}

\subsection{$G\times K$-equivariant compatible fibration for torus action}
\label{torus : compatible fibration}
In this subsection we construct a structure of equivariant compatible fibration
 using torus action. 

Let $G$ be a compact torus and $K$ a compact Lie group. 
Let $V$ be a smooth manifold equipped with an 
action of the product $G\times K$. 
We denote by $G_x$ the stabilizer subgroup of $G$ at a point $x\in V$. 
Let $A$ be the set of subgroups of $G$ which appears as the identity component 
of the stabilizer group at some point $x\in V$. 
We assume the following condition. 

\begin{assumption}
$A$ is a finite set. 
\end{assumption}
%
%


We endow $G$ with a {\it rational} flat Riemannian metric. 
Precisely speaking we take a Euclidean metric on the Lie algebra of $G$ such that the intersection of the integral lattice and the lattice generated by some orthonormal basis is a sublattice of maximal rank. 
We extend the metric to the whole $G$. 
Let $H$ be a closed subgroup of $G$.  
We denote by $H^{o}$ the identity component of $H$. 
Let $H^{\bot}$ be the orthogonal complement of $H^o$ 
defined as the image of the orthogonal complement of the Lie algebra of $H$ 
by the exponential map. 
Since the metric is rational $H^{\bot}$ is 
well-defined as a compact connected subgroup of $G$ and 
it has only finitely many intersection points $H \cap H^{\bot}$. 
Recall that there is a {\it good open covering} with respect to the $G$-action. 

\begin{lemma}[Existence of a good open 
covering, Lemma~5.3 in \cite{Fujita-Furuta-Yoshida2}]
\label{good open covering}
There exists an open covering $\{V_H \}_{H \in A}$ of $V$ 
parameterized by $A$  satisfying the following properties. 
\begin{enumerate}
\item Each $V_H$ is $G$-invariant.
\item 
For each $x \in V_H$ we have $G_x^o \subset H$.
\item If $V_H \cap V_{H'}\neq \emptyset$, then we have $H \subset H'$ or $H \supset H'$.
\item If for each $x\in V$ there is an open neighborhood $V_x$ of $x$ such that $V_{gx}=gV_x$ for all $g\in G$, then $\{V_H\}_{H\in A}$ satisfies $V_H\subset \bigcup_{G_x=H}V_x$. 
\end{enumerate}
\end{lemma}

\begin{proposition}\label{rem good open covering}
Let $\{V_H\}_{H\in A}$ be the good open covering as in 
Lemma~\ref{good open covering}.
There exists a structure of $G\times K$-equivariant compatible fibration on 
$\{V_H\}_{H\in A}$. 
\end{proposition}
\begin{proof}
Since each $V_H$ is 
constructed from an open neighborhood of the closed subset 
consisting of points whose stabilizer is equal to $H$, 
we may assume that $V_H$ is $G\times K$-invariant. 
Each $G$-invariant open subset 
$V_H$ has a structure of a foliation $\SF_{H}$ via 
the decomposition into the union of $H^{\perp}$-orbits.  
Each leaf of $\SF_{H}$ is a $H^{\perp}$-orbit through 
some point $x\in V_H$ and its holonomy group is equal to $G_x\cap H^{\perp}$. 
Note that $G_x\cap H^{\perp}$ is a finite group because 
$G_x$ is a subgroup of $H$ by the second property in Lemma~\ref{good open covering}.  
Then, $\{V_{H}, \SF_H\}_{H\in A}$ 
is a $G\times K$-equivariant compatible fibration on $V$. 
\end{proof}

\begin{remark}\label{S^1-case}
If $G$ is the circle group $S^1$, 
then the finite set $A$ consists of a single element. 
In this case we use the open covering $\{V_H\}_{H\in A}$ 
consisting of the single open set $V$.  
\end{remark}

\section{Equivariant compatible system}
In this section we recall the definition of 
{\it compatible system}, its {\it acyclicity} and their equivariant versions. 
We introduce the notion of {\it $G$-acyclicity}, which is a variant of usual acyclicity for the equivariant setting. 
As for the compatible system, 
definitions in this section are generalized versions 
of those given in \cite{Fujita-Furuta-Yoshida2}. 

Let $(V,g)$ be a Riemannian manifold, $W$ a $\Z /2$-graded $Cl(TV)$-module bundle on $V$. 
Suppose that $V$ is equipped with a compatible fibration $\{V_{\alpha}, {\SF}_{\alpha}\}_{\alpha\in A}$. 
In the rest of this paper we impose the following conditions on the Riemannian metric $g$. 
\begin{assumption}\label{assumption for Riemannian metric}
Let $\nu_\alpha=\{ u\in TV_\alpha \mid g(u,v)=0\ \text{for all }v\in T\SF_\alpha \}$ be the normal bundle of $\SF_\alpha$. Then, $g|_{\nu_\alpha}$ is invariant under holonomy, and gives a transverse invariant metric on $\nu_\alpha$.
\end{assumption}
\begin{remark}
The above Riemannian metric is an orbifold version of a compatible Riemannian metric which is actually used in \cite{Fujita-Furuta-Yoshida2}. 
If the compatible fibration is defined by using a torus action, then any torus invariant metric satisfies the assumption.  
\end{remark}
\subsection{Compatible system}

\begin{definition}\label{compatible system}
A {\it compatible system} on $(\{V_{\alpha}, \SF_{\alpha}\}, W)$ is a data $\{ D_{\alpha}\}_{\alpha \in A}$ satisfying the following properties. 
\begin{enumerate}
\item $D_{\alpha}\colon \Gamma (W|_{V_{\alpha}})\to \Gamma (W|_{V_{\alpha}})$ is a first order formally self-adjoint differential operator of degree-one.
\item $D_{\alpha}$ contains only the derivatives along leaves of $\SF_{\alpha}$. 
\item The principal symbol $\sigma (D_{\alpha})$ of $D_{\alpha}$ is given by $\sigma (D_{\alpha})=c\circ p_{\alpha}\circ \iota_{\alpha}^*\colon T^*V_{\alpha}\to \End (W|_{V_{\alpha}})$, where $\iota_{\alpha}\colon T\SF_{\alpha}\to TV_{\alpha}$ is the natural inclusion from the tangent bundle along leaves of $\SF_\alpha$ to $TV_\alpha$, $p_{\alpha}\colon T^*\SF_{\alpha}\to T\SF_{\alpha}$ is the isomorphism induced by the Riemannian metric and $c\colon T\SF_{\alpha}\to \End (W|_{V_{\alpha}})$ is the Clifford multiplication. 
\item For a leaf $L\in \SF_\alpha$ let $\tilde u\in \Gamma (\nu_\alpha|_L)$ be a section of $\nu_\alpha|_L$ parallel along $L$. 
$\tilde{u}$ acts on $W|_L$ by the Clifford multiplication $c(\tilde{u})$. Then $D_{\alpha}$ and $c(\tilde{u})$ anti-commute each other, i.e. 
\[
0=\{ D_{\alpha},c(\tilde{u}) \}:=
D_{\alpha}\circ c(\tilde{u})+c(\tilde{u})\circ D_{\alpha}
\]
as an operator on $W|_L$. 
\end{enumerate}
\end{definition}

The above definitions of the compatible fibration and 
the compatible system are introduced to avoid dealing with the orbifold 
singularities directly which come from finite isotropies of a torus action. 
The following lemma guarantees that we have an orbifold chart for each leaf, 
and analytic estimates for the compatible system holds 
as in \cite[Section~4]{Fujita-Furuta-Yoshida2} 
by considering the pull-back of $D_{\alpha}$\rq s 
by $q_L$ in next Lemma~\ref{covering}. 

\begin{lemma}\label{covering}
Suppose that $\SF$ is a foliation on a manifold $X$ and $L$ is a leaf with finite holonomy group. Take a small open tubular neighborhood $V_L$ of $L$ which is a union of leaves. Then, there is a finite covering $q_L\colon \tilde V_L\to V_L$ whose covering transformation is given by the holonomy representation. Moreover, such covering is unique up to isomorphism. 
\end{lemma}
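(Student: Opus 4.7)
The plan is to invoke Reeb's local stability theorem to realize $V_L$ explicitly as an associated bundle, and then obtain $\tilde V_L$ by partially trivializing that bundle along the kernel of the holonomy representation.

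First, fix a base point $x_0\in L$ and a small open transverse disk $D$ through $x_0$. The holonomy of $\SF$ along loops in $L$ defines a homomorphism $h\colon \pi_1(L,x_0)\to H\subset \Diff(D,x_0)$, and by hypothesis $H$ is finite. By the local stability theorem applied to the compact leaf $L$ with finite holonomy, after shrinking $D$ (and hence $V_L$) one may identify
\[
V_L \;\cong\; \tilde L \times_{\pi_1(L)} D,
\]
where $\tilde L\to L$ is the universal covering and $\pi_1(L)$ acts diagonally, namely by deck transformations on $\tilde L$ and via $h$ on $D$. Under this identification, each leaf of $\SF|_{V_L}$ is the image of some $\tilde L\times\{d\}$.

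Next, let $K:=\ker h$, so that $\pi_1(L)/K\cong H$, and let $\tilde L_0:=\tilde L/K$, the finite regular covering of $L$ with deck group $H$. Define
\[
\tilde V_L \;:=\; \tilde L_0 \times D,
\]
with the diagonal $H$-action: free on $\tilde L_0$, by the holonomy on $D$. Because the action on the first factor is free, the diagonal action on the product is free, and the induced map $q_L\colon \tilde V_L\to V_L$ — obtained by factoring the $\pi_1(L)$-quotient $\tilde L\times D\to V_L$ through the intermediate $K$-quotient — is a regular covering whose deck group is $H$, acting exactly by the prescribed holonomy representation. Note that $\tilde V_L$ is a trivial disk bundle over $\tilde L_0$, which is the content of the lemma that makes the pull-back of a compatible system tractable.

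For uniqueness, observe that $V_L$ deformation retracts onto $L$, so $\pi_1(V_L,x_0)\cong \pi_1(L,x_0)$. By the classification of coverings, any connected covering of $V_L$ whose deck transformation group is identified with $H$ via the holonomy representation must correspond to the subgroup $K=\ker h\subset \pi_1(V_L)$, and so agrees with $q_L$ up to isomorphism. The main obstacle is applying the local stability theorem in the precise form required — that is, arranging the identification $V_L\cong \tilde L\times_{\pi_1(L)}D$ with the diagonal action, and checking that the residual $H$-action on $\tilde V_L$ descends to $V_L$ to give exactly the holonomy representation — which is why the assumption of finite holonomy on $L$ is used throughout.
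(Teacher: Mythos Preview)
Your argument is correct and follows the standard route via Reeb local stability, which is exactly what the paper has in mind: the paper does not give a formal proof of this lemma, but Remark~\ref{characterization of q} indicates that the covering is constructed from the holonomy representation of $\pi_1(L,x)$ and is characterized by the property that the induced foliation on $\tilde V_L$ is a bundle foliation with generic leaves mapped diffeomorphically to generic leaves of $V_L$ --- precisely the product structure $\tilde L_0\times D$ you build.

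One small remark: the lemma as stated does not explicitly require $L$ to be compact, only that it has finite holonomy, whereas your invocation of Reeb stability uses compactness. In the paper's setting this is harmless, since Definition~\ref{compatible fibration} already assumes all leaves are compact; but you might want to flag that hypothesis explicitly. Your uniqueness argument via the classification of connected coverings by subgroups of $\pi_1(V_L)\cong\pi_1(L)$ is cleaner than the paper's characterization in terms of bundle foliations and generic leaves, though the two are easily seen to be equivalent.
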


\begin{remark}\label{characterization of q}
By taking and fixing a point $x\in L$ we can obtain a holonomy representation of the fundamental group $\pi_1(L,x)$ of $L$ with the base point $x$. The covering $q_L\colon \tilde V_L\to V_L$ is constructed by using this holonomy representation. By construction the induced foliation on $\tilde V_L$ is a bundle foliation. The covering $q_L\colon \tilde V_L\to V_L$ in Lemma~\ref{covering} is characterized by the following condition: The foliation on $\tilde V_L$ is a bundle foliation and a generic leaf in $V_L$ is diffeomorphic to some leaf in $\tilde V_L$ by $q_L$.  
%
\end{remark}
\begin{remark}
Suppose that in Lemma~\ref{covering} all holonomy groups of $\SF$ are finite. By Lemma~\ref{covering}, on a neighborhood of each leaf $L$ the covering $q_L$ is determined up to isomorphism. But, these coverings depend on the choices of the base points of the holonomy representations and are not determined canonically. In particular these coverings are not necessarily patched together globally. 
%
\end{remark}

We will use the notations $V_L$, $\tilde V_L$, and $q_L$ in the following definition. We also denote the projection map of the fiber bundle structure by $\pi_L\colon \tilde V_L\to\tilde U_L$. 

\begin{definition}\label{strongly acyclic}
A compatible system $\{ D_\alpha\}_{\alpha \in A}$ on $(\{V_{\alpha}, \SF_{\alpha}\}, W)$ is said to be {\it acyclic} if 
it satisfies the following conditions.  
\begin{enumerate}
\item The Dirac type operator 
$q^*_{L}D_{\alpha}|_{\pi_L^{-1}(\tilde b)}$ has zero kernel 
for each $\alpha\in A$, leaf $L\in \SF_{\alpha}$ and 
$\tilde b\in \tilde U_{L}$. 
\item If $V_\alpha\cap V_\beta\neq \emptyset$, then the anti-commutator 
$\{D_{\alpha},D_{\beta}\}$ is a non-negative operator on $V_\alpha\cap V_\beta$. 
\end{enumerate}
\end{definition}


\subsection{$G$-acyclic compatible system}

Suppose that there is an action of a compact Lie group $G$ on $V$ which preserves all the data. 

\begin{definition}\label{equivariant compatible system}
A compatible system $\{ D_\alpha\}_{\alpha \in A}$ on $(\{ V_\alpha ,\SF_\alpha\}_{\alpha \in A},W)$ is said to be {\it $G$-equivariant} if it satisfies the following conditions. 
\begin{enumerate}
\item $\{ V_\alpha ,\SF_\alpha \}_{\alpha \in A}$ is $G$-equivariant. 
\item For each $\alpha\in A$ $D_\alpha$ commutes with the $G$-action on $\Gamma (W|_{V_\alpha})$ given by pull-back. 
\end{enumerate}
\end{definition}

We introduce the notion of a $G$-acyclic compatible system. 
Suppose that $\{ V_\alpha ,\SF_\alpha\}_{\alpha \in A}$ is $G$-equivariant. 
For each $\alpha\in A$ let $G_\alpha$ be the subgroup of $G$ consisting of the elements preserving each leaf of $\SF_\alpha$.


\begin{lemma}
Let $L_\alpha$ be a leaf of $\SF_\alpha$. Let $V_{L_\alpha}$, $\tilde V_{L_\alpha}$, and $q_{L_\alpha}$ be the data as in Lemma~\ref{covering}. Then, $q_{L_\alpha}\colon \tilde V_{L_\alpha}\to V_{L_\alpha}$ has the unique structure of $G_\alpha$-equivariant covering such that for any generic leaf $L_\alpha'\subset V_{L_\alpha}$, the $G_\alpha$-action preserves $q_\alpha^{-1}(L_\alpha')$, and the diffeomorphism $q_{L_\alpha}|_{q_{L_\alpha}^{-1}(L_\alpha')}\colon q_{L_\alpha}^{-1}(L_\alpha')\to L_\alpha'$ is $G_\alpha$-equivariant. 
\end{lemma}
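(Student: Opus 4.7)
The plan is to lift the $G_\alpha$-action from $V_{L_\alpha}$ to $\tilde V_{L_\alpha}$ by using the uniqueness in Lemma~\ref{covering}, and then to pin down the choice of lift via the stated equivariance condition on generic leaves. As a preliminary step, I would arrange that $V_{L_\alpha}$ is $G_\alpha$-invariant: since $G_\alpha$ is a compact Lie group preserving $L_\alpha$ and (by definition of $G_\alpha$) each leaf of $\SF_\alpha$ individually, the equivariant tubular neighborhood theorem allows me to shrink $V_{L_\alpha}$ to a $G_\alpha$-invariant tubular neighborhood that is still a union of leaves.

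For existence, each $g \in G_\alpha$ is a diffeomorphism $g\colon V_{L_\alpha} \to V_{L_\alpha}$ preserving the foliation, so the pullback covering $g^\ast q_{L_\alpha}\colon g^\ast \tilde V_{L_\alpha} \to V_{L_\alpha}$ is again a finite covering whose induced foliation is a bundle foliation, with generic leaves in $V_{L_\alpha}$ lifting diffeomorphically to leaves of the pullback—that is, it satisfies the characterization in Remark~\ref{characterization of q}. By the uniqueness in Lemma~\ref{covering}, $g^\ast \tilde V_{L_\alpha}$ is isomorphic to $\tilde V_{L_\alpha}$ as a covering of $V_{L_\alpha}$, so there exists a diffeomorphism $\tilde g\colon \tilde V_{L_\alpha} \to \tilde V_{L_\alpha}$ covering $g$. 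A priori, $\tilde g$ is determined only up to composition with a deck transformation in the holonomy group $H$. The equivariance condition in the statement eliminates this ambiguity: on each component of $q_{L_\alpha}^{-1}(L_\alpha')$ the restriction of $q_{L_\alpha}$ is a diffeomorphism onto $L_\alpha'$, and requiring this restriction to be $G_\alpha$-equivariant pins down $\tilde g$ on that component.

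Uniqueness of the overall $G_\alpha$-action then follows from continuity: any two lifts satisfying the condition agree on the open dense set of preimages of generic leaves, hence agree on all of $\tilde V_{L_\alpha}$. The homomorphism property $\widetilde{gh} = \tilde g \tilde h$ is then automatic, since both sides are lifts of $gh$ satisfying the same condition. The main obstacle I expect concerns disconnected $G_\alpha$: for $g$ outside the identity component, one must verify that the equivariance condition singles out a unique one among the $|H|$ lifts differing by deck transformations. I would handle this by exploiting the forced commutativity between the lifted $G_\alpha$-action and the deck-transformation action of $H$ (since the lift must cover the quotient $V_{L_\alpha} = \tilde V_{L_\alpha}/H$), together with smoothness of $\tilde g$ across the central fiber $q_{L_\alpha}^{-1}(L_\alpha)$, where generic leaves accumulate and the equivariance on generic leaves propagates to the non-generic locus.
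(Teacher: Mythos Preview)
Your approach is sound but takes a considerably longer route than the paper's. The paper's proof is two sentences: from Remark~\ref{characterization of q}, $\tilde V_{L_\alpha}$ is identified with the fiber product $L_\alpha' \times_{L_\alpha} V_{L_\alpha}$, where $L_\alpha' \to L_\alpha$ is the covering of the central leaf by a fixed generic leaf and $V_{L_\alpha}\to L_\alpha$ is the tubular projection; since $G_\alpha$ acts on both factors $L_\alpha'$ and $V_{L_\alpha}$ compatibly with the maps to $L_\alpha$, the fiber product inherits a canonical $G_\alpha$-action, and the required equivariance on generic leaves is built into the construction.

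This explicit model completely sidesteps the deck-transformation ambiguity that occupies most of your argument, including your ``main obstacle'' about disconnected $G_\alpha$: in the fiber product there is no ambiguity to resolve, because the action is defined factorwise rather than lifted. Your abstract lifting argument would be the right tool if no explicit model of $\tilde V_{L_\alpha}$ were available, but here the fiber-product description is precisely what Remark~\ref{characterization of q} provides, so the paper gets the result essentially for free. The part of your proposal that remains genuinely useful is the uniqueness claim via density of generic leaves; the paper does not spell that out, and your continuity argument is the natural way to fill it in.
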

\begin{proof}
From Remark~\ref{characterization of q}, $\tilde V_{L_\alpha}$ is identified with the fiber product of the covering $L_{\alpha}'\subset V_{L_\alpha}\to L_\alpha$ and the projection $V_{L_\alpha}\to L_\alpha$. The $G_\alpha$-action is constructed using the $G_\alpha$-action on $L_\alpha'$ and $V_{L_\alpha}$. 

\end{proof}

\begin{definition}\label{G-acyclic compatible system}
Let $\{ D_\alpha\}_{\alpha \in A}$ a $G$-equivariant compatible system on $(\{ V_\alpha ,\SF_\alpha\}, W)$. $\{ D_\alpha\}_{\alpha \in A}$ is said to be {\it  $G$-acyclic} if it satisfies the following conditions. 
\begin{enumerate}
\item The $G_\alpha$-invariant part $\ker (q_L^*D_\alpha|_{\pi_L^{-1}(\tilde b)})^{G_\alpha}$ is trivial for each $\alpha\in A$, leaf $L\in \SF_{\alpha}$ and $\tilde b\in \tilde U_{L}$.
\item If $V_\alpha\cap V_\beta\neq \emptyset$, 
then the anti-commutator $\{D_{\alpha},D_{\beta}\}$ 
restricted on $\Gamma (W|_{V_{\alpha}\cap V_{\beta}})^G$ is a non-negative operator over $V_{\alpha}\cap V_{\beta}$.
\end{enumerate}
\end{definition}

\begin{remark}\label{strongly -> G-acyclic}
A $G$-equivariant acyclic compatible system is $G$-acyclic. 
\end{remark}

\section{Equivariant local index}

In this section we recall the definition of the {\it local index} 
in \cite{Fujita-Furuta-Yoshida2} and 
its equivariant version. We introduce two equivariant versions. 
We give the definition under the assumption of local torus action. 
Though the torus action itself is not essential to define the local index, 
it is enough to have an application to the quantization conjecture. 
In fact it is possible to define the local index and its equivariant version 
without using torus action under some technical assumptions. 
See \cite[Subsection~7.3]{Fujita-Furuta-Yoshida2} for details. 

Let $M$ be a Riemannian manifold which is not necessarily compact 
and $W$ a $\Z/2$-graded $Cl(TM)$-module bundle with the Clifford 
multiplication $c$. 
Suppose that there exists an open subset $V$ of $M$ 
whose complement $M\setminus V$ is compact. 
Let $G$ be a compact torus  which acts on $V$ in an isometric way, 
and the action lifts to $W|_V$ so that it commutes with $c$. 
We assume the following conditions. 
\begin{assumption}
\begin{enumerate}
\item Each $G$-orbit has positive dimension. 
\item $A$ is a finite set. 
\end{enumerate}
\end{assumption}
As we proved in Lemma~\ref{good open covering} and Proposition~\ref{rem good open covering} there exists a structure of 
a compatible fibration $\{V_{\alpha}, \SF_{\alpha}\}_{\alpha\in A}$ on $V=\cup_{\alpha\in A} V_{\alpha}$. Suppose that there exists a compatible system $\{D_{\alpha}\}_{\alpha\in A}$ on $(\{V_{\alpha}, \SF_{\alpha}\}_{\alpha\in A},W)$. 

\begin{remark}
In the next section we construct such a compatible system 
using torus action on an almost Hermitian manifold equipped with a 
Hermitian line bundle with connection. 
Under a suitable assumption of vanishing of parallel sections the compatible system is ($G$-)acyclic.  
\end{remark}

\subsection{Definition of $\ind(M,V,W)$}\label{ind(M,V,W)}
We recall the definition of the local index briefly. 
We extend the argument in \cite[Section~4]{Fujita-Furuta-Yoshida2} to our generalized setting. 
Since the argument here is parallel we omit proofs. See \cite[Section~4]{Fujita-Furuta-Yoshida2} for more details. 
We take a smooth function $f:M\to \R$ which is $G$-invariant on the end of $V$ 
and a regular value $c\gg 1 $ of $f$ such that $f^{-1}((-\infty, c])$ is compact 
and contains $M\setminus V$. 
Then we consider the complete manifold 
$\hat M=f^{-1}((-\infty, c])\cup \left( f^{-1}(c)\times [0,\infty)\right)$ 
with the cylindrical end $\hat V=(f^{-1}((-\infty, c])\cap V)\cup \left( f^{-1}(c)\times [0,\infty)\right)$ equipped with natural $G$-action. 
We can also construct a $\Z/2$-graded Clifford module bundle $\hat W\to \hat M$. See \cite{Fujita-Furuta-Yoshida1} for example for the 
construction of the Clifford action on $\hat W$. 
Since $f^{-1}(c)$ is an admissible subset in the sense of Definition~\ref{admissible subset},  the compatible fibration $\{V_{\alpha}, \SF_{\alpha}\}_{\alpha\in A}$ 
can be extended to a compatible fibration $\{\hat V_{\alpha}, \hat\SF_{\alpha}\}_{\alpha\in A}$ 
on $\hat V=\cup_{\alpha\in A}\hat V_{\alpha}$. 
Since $D_{\alpha}$ and the anti-commutator $D_{\alpha}D_{\beta}+D_{\beta}D_{\alpha}$ are operators along $G$-orbit for all $\alpha,\beta\in A $, 
we have the following. 

\begin{proposition}
The compatible system $\{D_{\alpha}\}_{\alpha\in A}$ 
can be extended to a compatible system $\{\hat D_{\alpha}\}_{\alpha\in A}$ 
on $(\{\hat V_{\alpha}, \hat\SF_{\alpha}\}_{\alpha\in A},W)$, which has translationally invariance on the 
end. 
Moreover if $\{D_{\alpha}\}_{\alpha\in A}$ is acyclic, then 
$\{\hat D_{\alpha}\}_{\alpha\in A}$ is also acyclic. 
\end{proposition}

We can take a $G$-invariant partition of unity $\{\rho_{\alpha}^2\}_{\alpha\in A}$ 
subordinate to $\{V_{\alpha}\}$ with some specific estimate as in \cite[Section~4]{Fujita-Furuta-Yoshida2}. 
Let $D$ be a Dirac-type operator on $\Gamma(\hat W)$ which is translationally invariant on the end. 
For $t\geq 0$ we consider the perturbation 
$$
D_t:=D+t\sum_{\alpha\in A}\rho\hat D_{\alpha}\rho_{\alpha}. 
$$
Note that $D_t$ has a decomposition into even-part and odd-part, 
$D_t=D_t^0+D_t^1$. 
The following is a main result in \cite{Fujita-Furuta-Yoshida2}. 
(See \cite[Theorem~7.2]{Fujita-Furuta-Yoshida2}.) 

\begin{theorem}
If $\{D_{\alpha}\}_{\alpha\in A}$ is acyclic, then 
for $t\gg 0$ the space of $L^2$-solutions of $D_ts=0$ is a $\Z/2$-graded finite dimensional vector space. 
Its super-dimension is independent of a sufficiently large $t\gg 0$ and any other continuous deformations of data. 
\end{theorem}

\begin{definition}\label{def of ind(M,V,W)}
We define the {\it local index} $\ind(M,V,W)=\ind (M,V,W,\{ V_\alpha ,\SF_\alpha\} , \{ D_\alpha\})$ to be the super-dimension,  
$$
\ind(M,V,W):=\dim \ker D_t^0\cap L^2(\hat W) -\dim \ker D_t^1\cap L^2(\hat W). 
$$
\end{definition}

\begin{proposition}[Proposition~7.3 and Proposition~7.4 in \cite{Fujita-Furuta-Yoshida2}]
The local index $\ind(M,V,W)$ does not depend on the various choices of data. 
In particular it does not depend on the completion $\hat M$. 
\end{proposition}

\begin{remark}
The local index $\ind(M,V,W)$ depends on the local torus action on $V$ 
and the choice of open covering $\{V_{\alpha}\}_{\alpha\in A}$. 
\end{remark}

\begin{theorem}[\cite{Fujita-Furuta-Yoshida2}]\label{local index}
The local index $\ind (M,V,W)$ has the following properties. 
\begin{enumerate}
\item $\ind (M,V,W)$ is invariant under continuous deformations of the data. 
\item \label{ind(M,V)=ind D}If $M$ is closed, then $\ind (M,V,W)$ is equal to the index $\ind D$ of a Dirac-type operator $D$ on $W$. 
\item If $V'$ is an admissible open subset of $V$ with complement $M\setminus V'$ compact, then we have 
\[
\ind (M,V,W)=\ind (M,V',W). 
\]
\item If $M'$ is an open neighborhood of $M\setminus V$ with $V\cap M'$ admissible, then $\ind(M,V,W)$ has the following excision property
\[
\ind(M,V,W)=\ind (M',V\cap M',W|_{M'}). 
\]
\item \label{disjoint union}Suppose $M$ is a disjoint union $M=M_1\coprod M_2$. 
Then we have the following sum formula
\[
\ind (M,V,W)=\ind(M_1,V\cap M_1,,W|_{M_1})+\ind(M_2,V\cap M_2,W|_{M_2}). 
\]
\item We have a product formula for $\ind (M,V,W)$. For the precise statement see \textup{\cite[Theorem~8.8]{Fujita-Furuta-Yoshida2}}. See also Section~\ref{product formula}. 
\end{enumerate}
\end{theorem}

In the rest of this paper, for simplicity, we sometimes use the notation $\ind (M,W)$ for $\ind (M,V,W)$. It would be no confusion since $\ind (M,V,W)$ have the excision property. 

\begin{remark}
From the properties~\ref{ind(M,V)=ind D} and \ref{disjoint union} in Theorem~\ref{local index} we can show that $\ind (M,V,W)$ also has the following vanishing property. 
\begin{enumerate}
\item[{\it 7}.] {\it If $M=V$, then we have}
\[
\ind(M,V,W)=0. 
\]
\end{enumerate}
\end{remark}

To obtain the product formula we need to formulate and 
define $\ind(M,V,W)$ for a manifold whose end is the total space of 
a fiber bundle such that both of its base space and its fiber are 
manifolds with cylindrical end. 
A similar generalization is necessary for $\ind^G(M,V,W)$ 
which will be defined in 
Subsection~\ref{G-acyclic system and invariant part}. 
For more details, see \cite{Fujita-Furuta-Yoshida2}.  

As a corollary of the excision formula in Theorem~\ref{local index} we have a localization formula of index. 
\begin{corollary}\label{localization1}
Under the assumption of Theorem~\ref{local index}, suppose that there exists an open covering $\{ O_i\}_{i=1}^m$ of $M\setminus V$ which satisfies the following properties. 
\begin{enumerate}
\item $\{O_{i}\}_{i=1}^m$ are mutually disjoint. 
\item Each $O_i$ is admissible. 
\end{enumerate}
Then we have 
\[
\ind (M,V,W)=\sum_{i}\ind(O_i,O_i\cap V,W_{O_i}). 
\]
\end{corollary}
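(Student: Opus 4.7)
The plan is to deduce this corollary from two properties of $\ind(M,V,W)$ already established in Theorem~\ref{local index}, namely the excision property (item~4) and the disjoint sum formula (item~5). I would not redo any analysis; the entire argument is a rearrangement at the level of these formal properties.

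First I would set $M':=\bigsqcup_{i=1}^{m} O_i \subset M$. Since $\{O_i\}$ covers $M\setminus V$ and the $O_i$ are mutually disjoint, $M'$ is an open neighborhood of $M\setminus V$ and is the disjoint union of the $O_i$ as an open submanifold of $M$. Before invoking excision I would verify the remaining hypothesis that $V\cap M'$ is admissible for $\{V_\alpha'\}$: we have $V\cap M'=\bigsqcup_i (V\cap O_i)$, and by assumption each $O_i$ is admissible, so if $L\in \SF_\alpha|_{V_\alpha'}$ meets $V\cap M'$ then $L$ lies entirely in the unique $O_i$ it hits, and in particular in $V\cap M'$. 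I would also note that the complement $M'\setminus(V\cap M')=M\setminus V$ is compact, so the data $(M',V\cap M',W|_{M'})$ satisfies the hypotheses of Theorem~\ref{local index}. Applying the excision property then yields
\[
\ind(M,V,W)=\ind(M',V\cap M', W|_{M'}).
\]

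Next I would apply the disjoint sum formula (property~\ref{disjoint union} in Theorem~\ref{local index}) iteratively to the decomposition $M'=O_1\sqcup \cdots \sqcup O_m$. Each $O_i$ inherits the compatible fibration $\{V_\alpha\cap O_i,\SF_\alpha|_{O_i}\}$ and the strongly acyclic compatible system $\{D_\alpha|_{V_\alpha\cap O_i}\}$ by restriction; moreover $O_i\setminus(V\cap O_i)=O_i\cap(M\setminus V)$ is a clopen subset of the compact set $M\setminus V$ and is therefore compact, so the hypotheses of Theorem~\ref{local index} apply to each summand. The sum formula then gives
\[
\ind(M',V\cap M',W|_{M'})=\sum_{i=1}^{m}\ind(O_i, V\cap O_i, W|_{O_i}).
\]
Combining the two equalities yields the claimed identity.

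The argument is essentially bookkeeping; the only subtle point, and the place where I would be most careful, is confirming that the admissibility condition on $V\cap M'$ really does follow from admissibility of each $O_i$, since admissibility is phrased leaf by leaf on each $V_\alpha'$ and one must be sure that the disjointness of the $O_i$ prevents a leaf from being split across two pieces. Once this is checked, no further analytic input is required beyond what Theorem~\ref{local index} already provides.
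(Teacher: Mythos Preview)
Your proof is correct and is exactly the intended derivation: the paper itself gives no argument here, merely stating that the corollary follows from Theorem~\ref{local index} (with a reference to \cite[Theorem~4.21]{Fujita-Furuta-Yoshida2}), and your use of excision followed by the disjoint-sum formula is the standard way to unpack that implication. Your checks that $V\cap M'$ is admissible and that each $O_i\cap(M\setminus V)$ is compact are the right points to verify, and they go through as you say.
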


\subsection{$K$-equivariant local index $\ind_K(M,V,W)$}\label{ind_K(M,V)}
Let $K$ be a compact Lie group which acts on $M$ in an isometric way and 
the action lifts to $W$ so that it commutes with the Clifford multiplication. 
Suppose that $V$ is $K$-invariant and $K$-action commutes with $G$-action on $V$. 
Then by Proposition~\ref{rem good open covering}, 
$\{V_{\alpha}, \SF_{\alpha}\}_{\alpha\in A}$ is a $K$-equivariant compatible fibration. 
We also assume that the compatible system $\{D_{\alpha}\}_{\alpha\in A}$ is $K$-equivariant. 
We can take a completion $\hat M$, $\{\hat V_{\alpha}\}_{\alpha\in A}$ etc. in Subsection~\ref{ind(M,V,W)} in a $K$-equivariant way, and we can consider the perturbation 
$$
D_{t}=D+t\sum_{\alpha\in A}\rho_{\alpha}\hat D_{\alpha}\rho_{\alpha}, 
$$which is also $K$-equivariant.  

\begin{theorem}
If $\{D_{\alpha}\}_{\alpha\in A}$ is acyclic, then 
for $t\gg 0$ the space of $L^2$-solutions of $D_ts=0$ becomes a $\Z/2$-graded finite dimensional $K$-representation. 
It defines a virtual $K$-representation, which is independent of a sufficiently large $t\gg 0$ and any other continuous deformations of data. 
\end{theorem}

\begin{definition}
We define the {\it $K$-equivariant local index} $\ind_K(M,V,W)=\ind_K(M,V,W,\{ V_\alpha ,\SF_\alpha\} , \{ D_\alpha\})$ of the $K$-equivariant acyclic system to be the virtual $K$-representation 
$$
\ind_K(M,V,W):=\ker D_t^0\cap L^2(\hat W) -\ker D_t^1\cap L^2(\hat W)\in R(K),  
$$where $R(K)$ is the representation ring of $K$. 
\end{definition}
As for the local index, the $\ind_K(M,V,W)$ does not depend on the various choices. 
For these data, we obtain an equivariant version of \cite[Theorem~7.9]{Fujita-Furuta-Yoshida2} for $\ind_K (M,V,W)$. 
\begin{theorem}\label{equivariant localization}
Suppose that there exists an open covering $\{ O_i\}_{i=1}^m$ of $M\setminus V$ which satisfies the following properties. 
\begin{enumerate}
\item $\{O_{i}\}_{i=1}^m$ are mutually disjoint. 
\item Each $O_i$ is $K$-invariant. 
\item Each $O_i$ is admissible.
\end{enumerate}
Then we have the following localization formula for $\ind_K(M,V,W)$
\begin{equation}\label{equivariant localization formula}
\ind_K(M,V,W)=\sum_{i}\ind_K(O_i,O_i\cap V,W|_{O_i})\in R(K).  
\end{equation}
\end{theorem}


\subsection{$G$-invariant local index $\ind^G(M,V,W)$}\label{G-acyclic system and invariant part}
In this section we assume that $G$ acts on whole $M$ preserving all the data.  
We can take a completion $\hat M$, $\{\hat V_{\alpha}\}_{\alpha\in A}$ etc. in Subsection~\ref{ind(M,V,W)} in a $G$-equivariant way, and we can consider the perturbation 
$$
D_{t}=D+t\sum_{\alpha\in A}\rho_{\alpha}\hat D_{\alpha}\rho_{\alpha}, 
$$which is also $G$-equivariant.  
Then, the same argument as that used to define $\ind(M,V,W)$ in Subsection~\ref{ind(M,V,W)} holds for the $G$-invariant part of $\ker D_t$ under the assumption of $G$-acyclicity. 

\begin{theorem}
If $\{D_{\alpha}\}_{\alpha\in A}$ is $G$-acyclic, then 
for $t\gg 0$ the space of $G$-invariant $L^2$-solutions of $D_ts=0$ is a $\Z/2$-graded finite dimensional vector space. 
Its super-dimension is independent of a sufficiently large $t\gg 0$ and any other continuous deformations of data. 
\end{theorem}

\begin{definition}
We define the {\it $G$-invariant local index} $\ind^G(M,V,W)=\ind^G(M,V,W,\{ V_\alpha ,\SF_\alpha\} , \{ D_\alpha\})$ of the $G$-acyclic compatible system to be the super-dimension  
$$
\ind^G(M,V,W):=\dim(\ker D_t^0)^G\cap L^2(\hat W) -\dim(\ker D_t^1)^G\cap L^2(\hat W)\in\Z.   
$$ 
\end{definition}
As for the local index, the $\ind^G(M,V,W)$ does not depend on the various choices. 
For $\ind^G(M,V,W)$ we have the following localization formula.  
\begin{theorem}\label{localization for invariant part}
Under the above assumption 
suppose that there exists an open covering $\{ O_i\}_{i=1}^m$ of $M\setminus V$ which satisfies the following properties. 
\begin{enumerate}
\item $\{O_{i}\}_{i=1}^m$ are mutually disjoint. 
\item Each $O_i$ is $G$-invariant. 
\item Each $O_i$ is admissible. 
\end{enumerate}
Then we have 
\[
\ind^G(M,V,W)=\sum_{i}\ind^G(O_i,O_i\cap V,W|_{O_i}). 
\]
\end{theorem}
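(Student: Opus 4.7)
The plan is to follow the template of the proof of Corollary~\ref{localization1} and its equivariant strengthening Theorem~\ref{equivariant localization}, restricting everything to the $G$-invariant part and using the $G$-acyclic condition of Definition~\ref{G-acyclic compatible system} in place of strong acyclicity.

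First I would reduce to the case in which $M$ has a translationally invariant cylindrical end $V=N\times(0,\infty)$ on which $G$ acts trivially in the $(0,\infty)$-direction, applying the deformation procedure used to define $\ind^G(M,V,W)$ in the general-end case. Because each $O_i$ is $G$-invariant, admissible, and the family $\{O_i\}$ is disjoint, this deformation can be carried out $O_i$-by-$O_i$. It produces for each $i$ a manifold $\hat O_i$ with cylindrical end $O_i\cap V$ carrying the induced $G$-acyclic data, together with a complementary "residual" piece $M_0\subset V$, which inherits translationally invariant $G$-acyclic data but has empty complement of $V$.

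Next I would choose a $G$-invariant admissible partition of unity $\{\rho_\alpha^2\}$, using the $G$-equivariance of the averaging operator in Definition~\ref{equivariant compatible fibration}~(\ref{equivariant averaging operator}), and refine so that each $\rho_\alpha$ is supported either inside some $\hat O_i$ or inside $M_0$. With this choice the perturbed operator $D_t=D+t\sum_\alpha \rho_\alpha D_\alpha \rho_\alpha$ splits as an orthogonal direct sum over the pieces, so
\[
(\ker D_t)^G\cap L^2(M,W)\;=\;\bigl((\ker D_t|_{M_0})^G\cap L^2\bigr)\;\oplus\;\bigoplus_i\bigl((\ker D_t|_{\hat O_i})^G\cap L^2\bigr).
\]
Taking super-dimensions for sufficiently large $t$ and applying the definitions immediately gives the desired formula once we verify that the $M_0$-contribution vanishes.

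The main obstacle, and the only place where the $G$-acyclic hypothesis does real work beyond repeating the arguments of \cite{Fujita-Furuta-Yoshida2}, is the vanishing statement $\ind^G(M_0,M_0,W|_{M_0})=0$: this is the $G$-invariant analogue of property~(7) in the remark following Theorem~\ref{local index}. The argument is to show that for $t\gg 0$ no nonzero $G$-invariant $L^2$-solution of $D_ts=0$ can exist on $M_0$. Condition (1) of Definition~\ref{G-acyclic compatible system} supplies, leaf by leaf, a uniform positive spectral gap of $(q_L^*D_\alpha)^2$ on the $G_\alpha$-invariant part of each fiber, and hence a pointwise lower bound for $(\rho_\alpha D_\alpha \rho_\alpha)^2$ on $G$-invariant sections over $\supp\rho_\alpha$. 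Condition (2) ensures that the cross terms $\rho_\alpha\rho_\beta\{D_\alpha,D_\beta\}$ appearing in $D_t^2$ remain non-negative on $\Gamma(W)^G$, so the sum is bounded below by a strictly positive multiple of $t^2$ on $G$-invariant sections, forcing the $G$-invariant kernel on $M_0$ to be zero for large $t$. This is precisely the $G$-invariant refinement of the positivity estimate used in \cite[Section~4]{Fujita-Furuta-Yoshida2}, and once it is in place the remaining steps of the proof (Fredholmness, independence of deformations, and additivity over the decomposition) are routine adaptations of the arguments already used to construct $\ind^G$.
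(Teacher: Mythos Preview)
Your proposal is correct and follows essentially the same route as the paper: the paper does not give an explicit proof of this theorem but indicates that the argument is parallel to that of Corollary~\ref{localization1} and \cite[Theorem~4.21]{Fujita-Furuta-Yoshida2}, carried out on the $G$-invariant part using the $G$-acyclic hypothesis in place of strong acyclicity. Your identification of the two conditions in Definition~\ref{G-acyclic compatible system} as supplying, respectively, the fiberwise spectral gap and the nonnegativity of cross terms on $\Gamma(W)^G$ is exactly the refinement of the positivity estimate in \cite[Section~4]{Fujita-Furuta-Yoshida2} that the paper has in mind.
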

\begin{remark}
Since a $G$-equivariant acyclic compatible system is $G$-acyclic (see Remark~\ref{strongly -> G-acyclic}), the $G$-equivariant acyclic compatible system $\{D_\alpha\}_{\alpha\in A}$ in Theorem~\ref{equivariant localization} can be taken as the $G$-acyclic compatible system $\{D_\alpha\}_{\alpha\in A}$ in Theorem~\ref{localization for invariant part}. In this case, Theorem~\ref{localization for invariant part} can be obtained from Theorem~\ref{equivariant localization} for $K=G$ by taking the $G$-invariant part of $\ind_G(M,V,W) $. 
However, for a general $G$-acyclic compatible system Theorem~\ref{localization for invariant part} is not obtained from Theorem~\ref{equivariant localization}.  
\end{remark}

\begin{remark}
Though the $G$-invariant local index can be define for general $G$-acyclic compatible system which does not come from torus action, 
we only use the case of torus action in this paper.  
\end{remark}

\subsection{$K$-equivariant $G$-invariant local index $\ind_K^G(M,V,W)$}
We combine the assumptions in Subsection~\ref{ind_K(M,V)} and Subsection~\ref{G-acyclic system and invariant part}. 
Namely the product $G\times K$ acts on $M$ in an isometric way and the action lifts to $W$ so that it commutes with the Clifford multiplication, 
and the compatible system $\{D_{\alpha}\}_{\alpha\in A}$ is $G\times K$-equivariant. 
Suppose that $\{ D_\alpha\}_{\alpha \in A}$ is $G$-acyclic. Then, we have $\ind^G(M,V,W)$ and it becomes a virtual representation of $K$. We denote it by $\ind_K^G(M,V,W)$. In this case, we have a $K$-equivariant version of Theorem~\ref{localization for invariant part}. 
\begin{theorem}\label{equivariant localization for invariant part}
Under the above assumption 
suppose that there exists an open covering $\{ O_i\}_{i=1}^m$ of $M\setminus V$ which satisfies the following properties. 
\begin{enumerate}
\item $\{O_{i}\}_{i=1}^m$ are mutually disjoint. 
\item Each $O_i$ is $G\times K$-invariant. 
\item Each $O_i$ is admissible. 
\end{enumerate}
Then we have 
\[
\ind_K^G(M,V,W)=\sum_{i}\ind_K^G(O_i,O_i\cap V,W|_{O_i}). 
\]
\end{theorem}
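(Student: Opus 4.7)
The plan is to run the proof of Theorem~\ref{localization for invariant part} while carrying the commuting $K$-action through every step, so that each integer appearing in that argument is refined to the character of a virtual $K$-representation. Because $K$ is compact and commutes with $G$, averaging over $K$ lets us arrange all auxiliary choices to be $G\times K$-invariant, and then the existing arguments apply verbatim.

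First I would check the $K$-equivariant refinements of the constructions in Subsection~\ref{G-acyclic system and invariant part}. By Lemma~\ref{partition of unity} followed by averaging over $G\times K$, one obtains a $G\times K$-invariant admissible partition of unity $\{\rho_\alpha^2\}$ subordinate to $\{V_\alpha'\}$; averaging preserves admissibility thanks to Assumption~\ref{technical assumptions}(3) together with Definition~\ref{equivariant compatible fibration}(4). A $G\times K$-invariant auxiliary Dirac-type operator $D$ on $\Gamma(W)$ is obtained in the same manner. The perturbation $D_t = D + t\sum_\alpha \rho_\alpha D_\alpha \rho_\alpha$ then commutes with $G\times K$, so for $t\gg 0$ its $G$-invariant $L^2$-kernel is a finite-dimensional $\Z/2$-graded $K$-representation whose virtual character depends on the data only through $G\times K$-equivariant continuous deformations. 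This upgrades Proposition~\ref{invariant part for Euclidian end} and the ensuing construction of $\ind^G(M,V,W)$ to produce a virtual $K$-representation, and by definition this is precisely $\ind_K^G(M,V,W)$.

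With these $K$-equivariant refinements in hand, the localization argument proceeds exactly as in Theorem~\ref{localization for invariant part}. Using the compactness of $M\setminus V$ together with the disjointness and $G\times K$-invariance of the $O_i$, one chooses $G\times K$-invariant open sets $U_i$ with $\overline{U_i}\subset O_i$ and $M\setminus V\subset \bigcup_i U_i$ (shrink each $O_i$ slightly and average over the compact group $G\times K$, using the remark after Definition~\ref{equivariant compatible fibration} to preserve admissibility). Applying the excision, disjoint-union, and admissibility properties of $\ind$ from Theorem~\ref{local index} in their $K$-equivariant forms then decomposes $\ind_K^G(M,V,W)$ as $\sum_i \ind_K^G(O_i, O_i\cap V, W|_{O_i})$ in the representation ring $R(K)$.

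The main obstacle is bookkeeping: verifying at each step --- the deformation to a cylindrical end, the averaging of the partition of unity and the Dirac-type operator, and the excision identifications --- that the averaged objects still satisfy the admissibility and compatibility conditions encoded in Definition~\ref{equivariant compatible fibration} and Assumption~\ref{technical assumptions}, and that the $G$-invariant $L^2$-kernels on each side are identified as $K$-representations (not merely as graded vector spaces). Once this is done, the equality of virtual $K$-characters is the $K$-equivariant shadow of the equality already proved in Theorem~\ref{localization for invariant part}.
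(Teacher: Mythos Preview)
Your proposal is correct and matches the paper's approach: the paper does not give an explicit proof of this theorem, instead treating it as the straightforward $K$-equivariant upgrade of Theorem~\ref{localization for invariant part}, which in turn parallels the non-equivariant localization (Corollary~\ref{localization1}) derived from the excision and disjoint-union properties of the local index in their earlier work. Your outline---average all auxiliary choices to be $G\times K$-invariant, observe that the $G$-invariant $L^2$-kernel then carries a $K$-representation, and apply excision plus the sum formula---is exactly the intended argument.
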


\section{The case of torus action}
\label{The case of torus action}
Let $G$ be a compact torus and $K$ a compact Lie group. 
Let $V$ be a smooth manifold equipped with an 
action of $G\times K$ and a $G\times K$-invariant almost complex structure $J$. We take and fix a Riemannian metric $g$ on $V$ 
which is invariant by the almost complex structure $J$ 
and the $G\times K$-action. 
Let $L\to V$ be a $G\times K$-equivariant Hermitian line bundle over $V$ 
equipped with a $G\times K$-invariant Hermitian connection $\nabla$.  
Note that since $\nabla$ is invariant under the torus action 
the restriction of $(L,\nabla)$ to each $G$-orbit 
is a flat line bundle. 
Let $A$ be the set of subgroups of $G$ which appears as the identity component 
of the stabilizer group at some point $x\in V$. 
We assume the following four conditions. 

\begin{enumerate}
\item Each $G$-orbit is totally real, i.e., 
any subspace $\xi$ of the tangent space of the orbit 
satisfies $\xi\cap J\xi=0$. 
\item Each $G$-orbit has positive dimension. 
\item $A$ is a finite set. 
\item For each $x\in V$ there exists an open neighborhood 
$V_x$ of $x$ such that $gV_x=V_{gx}$ for all $g\in G$ and 
 the restriction of $L$ to $G_x^{\perp}$-orbit 
$G_x^{\perp}y$ has no $G_x^{\perp}$-invariant 
nontrivial parallel sections for all $y\in V_x$. 

\end{enumerate}
Note that since each orbit is a torus 
the last condition is equivalent to the vanishing of the 
$G_x^{\perp}$-invariant part of 
cohomologies with local coefficient, 
$H^*(G_x^{\perp}y, L|_{G_x^{\perp}y})^{G_x^{\perp}} =0$. 
See \cite[Lemma~5.12]{Fujita-Furuta-Yoshida2} for example. 
By Proposition~\ref{rem good open covering} there exists an open covering 
$\{V_{H}\}_{H\in A}$ of $V$ which has a structure of $G\times K$-equivariant compatible fibration $\{V_H, \SF_{H}\}_{H\in A}$.  
In this section we show that $V$ is equipped with a structure of 
$K$-equivariant $G$-acyclic compatible system $\{D_{H}\}_{H\in A}$. 

\subsection{$K$-equivariant $G$-acyclic compatible system on $(\{V_H, \SF_H\}_{H\in A},W)$}
\label{torus : compatible system}
Let $T\SF_{H}\to V_{H}$ be the tangent bundle along leaves of $\SF_{H}$. 
Since each $G$-orbit is totally real 
we have a canonical injection 
$T\SF_{H}\otimes_{\R} \C\to TV_{H}$ via 
the almost complex structure on $V$. 
Let $T'V_{H}$ be the orthogonal complement of 
$T\SF_{H}\otimes_{\R} \C$ in the Hermitian vector bundle $TV_{H}$, 
which is canonically isomorphic to $TV_{H}/(T\SF_{H}\otimes \C)$. 
There is a canonical isomorphism 
$TV_{H}\cong (T\SF_{H}\otimes \C)\oplus T'V_{H}$ 
as Hermitian vector bundles. Using the isomorphism we have an isomorphism 
$$
\wedge^{0,\bullet} T^*V_{H}\cong 
(\wedge^{\bullet} T^*\SF_{H})\otimes 
 \wedge^{0,\bullet}(T'V_{H})^*. 
$$
We define a $\Z/2$-graded Clifford module bundle $W$ by 
$$
W:=\wedge^{0,\bullet}T^*V\otimes L.  
$$
Note that there is the canonical isomorphism 
$$
W|_{V_{H}}\cong (\wedge^{\bullet} T^*\SF_{H})\otimes 
\wedge^{0,\bullet} (T'V_{H})^*\otimes L|_{V_{H}}. 
$$
Let $D_{H}:\Gamma(W|_{V_{H}})\to \Gamma(W|_{V_{H}})$ be the 
Dirac operator along leaves of $\SF_{H}$ 
which is defined by the de Rham operator with coefficient in $L|_{V_{H}}$. Strictly speaking $D_{H}$ is a differential operator acting on 
$\Gamma((\wedge^{\bullet} T^*\SF_{H})\otimes L|_{V_{H}})$ 
which is defined by the de Rham operator along leaves of $\SF_{H}$ and 
the Hermitian connection $\nabla$ of $L$. 
Since the restriction of $T'V_{H}$ to each leaf of $\SF_{H}$ 
has a canonical flat structure induced by the $H^{\bot}$-action, 
we can regard $D_{H}$ as a differential operator 
acting on $\Gamma((\wedge^{\bullet} T^*\SF_{H})\otimes 
\wedge^{0,\bullet} (T'V_{H})^*\otimes L|_{V_{H}})$.  
Then $\{D_H\}_{H\in A}$ is a $G\times K$-equivariant compatible system 
on $\{V_{H}, \SF_H\}_{H\in A}$.

\begin{proposition}\label{G-acyclicity}
The data $\{D_{H}\}$ is a 
$K$-equivariant $G$-acyclic compatible system on $V$. 
\end{proposition}
\begin{proof}

Suppose that $x\in V$ is contained in $V_H$ for some $H\in A$. 
Let $H^{\perp}x$ be the $H^{\perp}$-orbit through the point $x$ 
and $\tilde U_x$ the slice at $x$ with respect to the $H^{\perp}$-action. 
We put $\tilde V_x:=H^{\perp}\times \tilde U_x$. 
Note that the natural map  
$q_x:\tilde V_x\to V_{x}:=H^{\perp}\tilde U_x$ 
is a finite covering whose covering transformation group is 
equal to $G_x\cap H^{\perp}$. 
Let $\pi_x:\tilde V_x\to \tilde U_{x}$ be the projection 
to the second factor. 
The finite covering $q_x:\tilde V_x\to V_x$ and 
the $H^{\perp}$-bundle $\pi_{x}:\tilde V_x\to \tilde U_x$ are
the data in Lemma~\ref{covering}.  
In the above setting $H^{\perp}$ is the subgroup 
which preserves each leaf of $\SF_H$. 
By the 4th condition in our assumption and property~(4) in Lemma~\ref{good open covering} 
there exists $x'\in V$ such that $x\in V_{x'}$, $G_{x'}=H$ and 
$(\ker D_H)^{H^{\perp}}=H^*(G_{x'}^{\perp}x; L|_{G_{x'}})^{G_{x'}^{\perp}}=0$. 
Moreover if $V_H\cap V_{H'}\neq\emptyset$ and $H'\subset H$, 
then the anti-commutator $\{D_H,D_{H'}\}$ is the 
Laplacian on the $H^{\perp}$-orbit (Lemma~5.9 
in \cite{Fujita-Furuta-Yoshida2}), and hence, it is non-negative.
Then we complete the proof. 
\end{proof}

\subsection{Cotangent bundle of the torus}
\label{cotangent bundle case}
Let $T^*G$ be the cotangent bundle of the torus $G$. 
Consider the canonical symplectic form $\omega$ on $T^*G$ 
defined by the canonical 1-form $\alpha$ on $T^*G$. 
Note that $T^*G$ has a natural $G$-action induced by 
the multiplication of $G$. 
We fix a $G$-equivariant trivialization $T^*G\cong G\times \g^*$, 
where $\g^*$ is the dual of the Lie algebra of $G$. 
Let $L$ be the $G$-equivariant Hermitian line bundle 
$T^*G\times \C$ over $T^*G$ 
with Hermitian connection $\nabla$ defined by $\nabla=d-\sqrt{-1}\alpha$, 
where the $G$-action on the $\C$-factor is trivial.  
Let $\g_{\Z}$ be the integral lattice and $\g^*_\Z$ the weight lattice, i.e., 
\[
\begin{split}
\g_{\Z}&=\{ \xi \in \g \mid \exp \xi =e\in G\} ,\\ 
\g^*_\Z&=\{ \eta^*\in \g^* \mid \< \xi,\eta^*\>\in \Z\ \forall \xi \in \g_{\Z}\} . 
\end{split}
\]
Take an integral weight $\xi \in \g^*_{\Z}$ and 
define $L(\xi)$ by $L(\xi)=L\otimes \underline{\xi}$, 
where $\underline{\xi}$ is the trivial line bundle equipped with 
the $G$-action whose weight is given by $\xi$. 
The definition of the canonical $1$-form implies that 
$\eta\in\g^*$ is in weight lattice $\g_{\Z}^*$ if and only if 
the restriction of $(L,\nabla)$ to $G\times \{\eta\}$ is trivially flat. 
Moreover the proof of \cite[Lemma~5.12]{Fujita-Furuta-Yoshida2} shows 
the following lemma. 
\begin{lemma}\label{G-cohomology}
We have the following isomorphism between representations of $G$. 
$$
H^*(G\times\{\eta\}, (L(\xi),\nabla)|_{G\times\{\eta\}})\cong
\left\{\begin{array}{ll}
0  \quad (\eta\notin \g_{\Z}^*) \\ 
H^*(G\times\{\eta\}, \C)\otimes \xi \quad (\eta\in \g_{\Z}^*). 
\end{array}\right.
$$
\end{lemma}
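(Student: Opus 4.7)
The plan is to restrict the data explicitly to the slice $G\times\{\eta\}$ and reduce the statement to the classical computation of the de Rham cohomology of a flat line bundle on a torus.

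First I would unpack the restricted geometry. Using the $G$-equivariant trivialization $T^*G\cong G\times\g^*$, the canonical 1-form $\alpha$ at $(g,\eta)$ coincides with the left-invariant 1-form $\eta^L$ on $G$ whose value at the identity is $\eta$. Consequently, restricted to the slice $G\times\{\eta\}$, the connection $\nabla=d-\sqrt{-1}\alpha$ becomes the flat connection $d-\sqrt{-1}\eta^L$ on the trivial line bundle over $G$. Because the additional factor $\underline{\xi}$ in $L(\xi)=L\otimes\underline{\xi}$ carries the trivial connection and only modifies the $G$-equivariant structure, $(L(\xi),\nabla)|_{G\times\{\eta\}}$ is precisely the flat $G$-equivariant line bundle on $G$ determined by this explicit connection together with a weight-$\xi$ twist of the standard $G$-action.

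Next I would determine when this flat bundle is trivializable. Pulling back to the universal cover $\g$, the connection $d-\sqrt{-1}\eta$ is gauged to $d$ by the function $\exp(\sqrt{-1}\eta(X))$, and this gauge descends to $G=\g/\g_\Z$ precisely when $\eta\in\g^*_\Z$ (with the paper's normalization of the lattice). Equivalently, the holonomy of the restricted connection around the loop generated by $\xi_0\in\g_\Z$ is a phase determined by $\eta(\xi_0)$, which vanishes for every $\xi_0\in\g_\Z$ if and only if $\eta\in\g^*_\Z$.

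Two standard calculations then handle the two cases. If $\eta\notin\g^*_\Z$, the flat bundle has nontrivial holonomy; Hodge decomposition with respect to the bi-invariant metric on $G$ (or Fourier expansion along characters of $G$, where the shifted Laplacian has spectrum bounded away from zero) shows that the twisted de Rham cohomology vanishes. If $\eta\in\g^*_\Z$, the flat trivialization identifies the twisted de Rham complex with the untwisted one, so the cohomology is $H^*(G,\C)$ as a complex vector space. Both statements are already contained in \cite[Lemma~2.29]{Fujita-Furuta-Yoshida2}, which I would invoke here.

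The remaining task, and the main obstacle, is to identify the correct $G$-equivariant structure on the right-hand side. One computes the $G$-action on the one-dimensional space of flat sections of $L|_{G\times\{\eta\}}$ from $(g\cdot f)(g_0)=f(g^{-1}g_0)$ applied to the explicit section $f(g)=\exp(\sqrt{-1}\eta(\log g))$; tensoring with $\underline{\xi}$ then contributes the additional weight $\xi$. Since $G$ acts trivially on $H^*(G,\C)$ (being connected and acting on itself by translations), one obtains the stated isomorphism of $G$-representations. The only non-routine point is a careful tracking of conventions (left versus right action, sign of the connection, normalization of the weight lattice) so that all factors combine to the representation $\xi$ displayed on the right-hand side.
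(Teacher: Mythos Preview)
Your approach is essentially the paper's: the paper gives no self-contained argument but simply notes that the proof of \cite[Lemma~2.29]{Fujita-Furuta-Yoshida2} yields the result, and you have unfolded exactly that argument---restrict the connection to the slice, read off the holonomy, and invoke the standard twisted de Rham computation on a torus (which is precisely the cited lemma). For the non-equivariant dichotomy your proof is complete and matches the paper.

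One point in your last paragraph deserves a closer look. Applying your own formula $(g\cdot f)(g_0)=f(g^{-1}g_0)$ to the explicit flat section $f(g_0)=\exp(\sqrt{-1}\eta(\log g_0))$ yields $(g\cdot f)=\exp(-\sqrt{-1}\eta(\log g))\,f$, so the $G$-weight contributed by $L$ is $\pm\eta$ (the sign depending on conventions), not zero. After tensoring with $\underline{\xi}$ the total weight is $\xi\pm\eta$; convention-tracking alone will not make the $\eta$ term disappear. Compare the computation in the proof of Proposition~\ref{non-L-acyclic=integral}, which shows exactly this phenomenon: a parallel section over an orbit at moment-map level $\eta^*$ transforms by the character $e^{-2\pi\sqrt{-1}\langle\eta^*,\,\cdot\,\rangle}$. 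This does not affect anything the paper actually uses, since the only integral point in the small neighborhood $M$ of the zero section is $\eta=0$; there the flat section is the constant function, its $G$-weight is trivial, and the right-hand side is genuinely $H^*(G,\C)\otimes\xi$ as stated. For $\eta=0$ your argument is complete; for general $\eta\in\g^*_\Z$ either the statement needs an extra weight-$\eta$ factor or an additional argument is required.
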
 
Let $M$ be a small $G$-invariant open neighborhood 
of the zero section in $T^*G$ so that 
the image of $M$ under the projection $T^*G\to \g^*$ 
does not contain any non-zero integral points. 
Let $V$ be the complement of the zero section in $M$. 
Consider the natural complex structure on $T^*G$ 
induced from the trivialization of $T^*G$ and the metric of $\g$, 
which is compatible with the symplectic structure. 
By Lemma~\ref{G-cohomology} and the construction in the previous subsections 
we have a $G$-equivariant acyclic compatible system on $V$ for the Clifford module bundle $W(\xi):=\wedge^{0,\bullet}T^*M\otimes L(\xi)$, 
and we can define the $G$-equivariant index ${\rm ind}_G(M,V,W(\xi))\in R(G)$. 
Note that since the $G$-action on $T^*G$ is free, 
we use the open covering consisting of the single open set $V$. 
In \cite[Remark~6.10]{Fujita-Furuta-Yoshida1} we give explicit solutions 
of the equation $D_ts=0$ in the case of $\dim G=1$.  
Using the explicit description we have the following, 
which will be used in the proof of Theorem~\ref{equiv[Q,R]=0} in Subsection~\ref{Proof of S^1-case}. 
\begin{proposition}\label{index of T^*S^1}
If $G$ is the circle group $S^1$, then we have 
${\rm ind}_G(M,V,W(\xi))=\xi$. 
\end{proposition}

\begin{remark}

It is expected that a similar argument is possible to
calculate  the equivariant index for higher dimensional cases.
The numerical index is already calculated in our previous paper
\cite[Theorem~6.11]{Fujita-Furuta-Yoshida1} and is equal to $1$.
In the calculation there we used the embedding
$T^*G\subset G\times G$ derived from the one-point compactification
$\R \to S^1$.  This compactification, however, is not $G$-equivariant
and hence is not available to the calculation of the equivariant index.
Another possible approach to the higher dimensional case
would be to use the product structure
$G=(S^1)^n$ and apply the product formula of equivariant index. 
However,
it would be necessary to compare
the compatible system given by the product structure
with the one given by the $G$-action on $T^*G$.
Since we have not shown such comparizon,  
this approach is not completed yet.
Because the $G$-action on $H^*(G,\C)$ is trivial,
we have at least the next vanishing property of the $G$-invariant part
from Lemma~\ref{G-cohomology} and 
the vanishing of $G$-invariant index.
\end{remark}

\begin{proposition}
For $\xi\neq 0$ we have ${\rm ind}^G(M,V,W(\xi))=0$. 
\end{proposition}

\section{Vanishing theorem for $S^1$-acyclic compatible systems}

In this section we show the vanishing theorem 
of $\ind_K^{S^1}(M,V,W)$ for $K$-equivariant $S^1$-acyclic compatible system 
under the setting in Section~\ref{The case of torus action}. 

Let $K$ be a compact Lie group. 
Let $M$ be a smooth manifold equipped with an 
action of $S^1\times K$ and $S^1\times K$-invariant almost complex structure. 
We take and fix an $S^1\times K$-invariant Hermitian metric on $M$. 
Suppose that the fixed point set $M^{S^1}$ of the $S^1$-action is 
a closed connected submanifold of $M$. 
Let $L\to M$ be an $S^1\times K$-equivariant Hermitian line bundle over $M$ 
equipped with an $S^1\times K$-invariant Hermitian connection $\nabla$ 
such that the fixed point set $L^{S^1}$ of the $S^1$-action 
is equal to the image of the zero section of $M^{S^1}$ to $L|_{M^{S^1}}$. 
Note that the restriction of $(L,\nabla)$ to each $S^1$-orbit 
is a flat line bundle. 

In the next subsection we show that 
there is an $S^1\times K$-invariant open neighborhood $M'$ of $M^{S^1}$
and a $K$-equivariant $S^1$-acyclic compatible system on $M'\setminus M^{S^1}$.  Here we use the Clifford module bundle $W:=\wedge^{0,\bullet}T^*M'\otimes L$. 
The main theorem in this section is the following vanishing theorem. 
\begin{theorem}\label{main S^1-vanishing}
$$
{\rm ind}_K^{S^1}(M',M'\setminus M^{S^1}, W)=0\in R(K). 
$$
\end{theorem}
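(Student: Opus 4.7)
The plan is to reduce the vanishing to a local computation near $M^{S^1}$ via an equivariant tubular neighborhood and an equivariant product formula, and then to apply the explicit computation of Proposition~\ref{index of T^*S^1} together with the hypothesis that $L|_{M^{S^1}}$ carries a nontrivial $S^{1}$-weight.

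First I would apply the equivariant tubular neighborhood theorem to the closed $S^1\times K$-invariant submanifold $M^{S^1}$, identifying an $S^1\times K$-invariant neighborhood with a neighborhood of the zero section in the normal bundle $\pi\colon N \to M^{S^1}$. Since the $S^1$-invariant $J$ preserves $TM|_{M^{S^1}}$ and $TM^{S^1}$, the bundle $N$ is naturally a complex vector bundle, and since $M^{S^1}$ is the entire $S^1$-fixed set, $N$ splits as a finite Whitney sum $\bigoplus_{k\neq 0} N_k$ of nonzero-weight subbundles. Using the excision property for $\ind_K^{S^1}$ (the analogue of Theorem~\ref{local index}~(4) in the $K$-equivariant $G$-acyclic setting) together with deformation invariance, I would replace $M'$ by a small disk subbundle of $N$ and deform the Hermitian metric, the almost complex structure, and $L$ into a product form in which $L|_{M'} \cong \pi^{\ast}L|_{M^{S^1}}$ with a product connection; throughout such deformations, the compatible fibration of Subsection~\ref{torus : compatible system} is intrinsic to the $S^1$-orbit structure of $V = M' \setminus M^{S^1}$, and the compatible system remains $K$-equivariant $S^1$-acyclic by Proposition~\ref{G-acyclicity}.

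Next I would invoke a $K$-equivariant, $S^1$-acyclic version of the product formula (Theorem~\ref{local index}~(6), adapted as indicated in the remark following Proposition~\ref{index of T^*S^1}) to write
\[
\ind_K^{S^1}(M',V,W) = \ind_K\bigl(M^{S^1},\, \wedge^{0,\bullet}T^{\ast}M^{S^1}\otimes L|_{M^{S^1}}\bigr)\cdot\ind^{S^1}(\text{fiber model}),
\]
thereby reducing to the vanishing of the $S^1$-invariant part of the fiber factor. Fiberwise, a punctured neighborhood of $0\in N_p$ decomposes as a product of punctured planes $\C\setminus\{0\}$, one for each weight summand $N_{k_j}$, and each such plane is identified with an $S^1$-invariant open subset of $T^{\ast}S^1$ via the polar map $z\mapsto(\arg z,\log|z|)$. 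Iterating Proposition~\ref{index of T^*S^1} over the weight summands, the fiber factor is a single $S^1$-character whose weight is a nonzero integer built from the nonvanishing weight $m$ of $L|_{M^{S^1}}$ (the nonvanishing of $m$ being the content of the assumption $L^{S^1}$ equals the zero section over $M^{S^1}$); in particular it is a nontrivial character of $S^1$, so its $S^1$-invariant part is $0$, which yields the desired vanishing.

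The main obstacle will be Step~2: Theorem~\ref{local index}~(6) is stated only for $\ind$, and extending it to $\ind_K^{S^1}$ forces one to track the $S^1$-invariant part of $\ker D_t$ through the product construction and to verify that the product compatible system is itself $S^1$-acyclic. A secondary difficulty, flagged already in the remark after Proposition~\ref{index of T^*S^1}, is the comparison between the compatible system of Subsection~\ref{torus : compatible system} on a product of punctured planes and the one produced from the standard $S^1$-action on each factor via Subsection~\ref{cotangent bundle case}; once these compatibilities are in place, the nonvanishing of $m$ closes the argument.
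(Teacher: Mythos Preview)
Your reduction to the normal bundle via a tubular neighborhood and a product formula is essentially what the paper does (Propositions~\ref{product formula for fixed point set} and~\ref{product formula for fixed point set 2}). The gap is entirely in your fiber computation.

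First, the claim that a punctured neighborhood of $0$ in the fiber $N_p\cong\C^n$ ``decomposes as a product of punctured planes $\C\setminus\{0\}$'' is simply false for $n\ge 2$: the set $\C^n\setminus\{0\}$ is not $(\C\setminus\{0\})^n$, and the $S^1$-fixed locus is only the origin, not a union of coordinate hyperplanes. There is no product decomposition of the compatible fibration on $Y_1\setminus\{0\}$ into one-dimensional pieces, so ``iterating Proposition~\ref{index of T^*S^1}'' has no meaning here.

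Second, even in the one-dimensional case, Proposition~\ref{index of T^*S^1} is not the right tool. That proposition concerns $T^*S^1$ with the canonical connection $d-\sqrt{-1}\alpha$ on $L(\xi)$; the strong acyclicity there comes from the \emph{holonomy} varying with the $\mathfrak g^*$-coordinate, and the localization is at the zero section. In the present situation the fiber line bundle $L_1=R_\nu\times R_L$ carries the \emph{product} connection, so the restriction to every $S^1$-orbit is trivially flat; the system is only $S^1$-acyclic, and only because $R_L$ is a nontrivial $S^1$-character. Under your polar map the origin of $\C$ is sent to $\sigma=-\infty$, not to the zero section of $T^*S^1$, so the two local indices are not even computing contributions from corresponding loci. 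The remark after Proposition~\ref{index of T^*S^1} that you cite is about $T^*G$ for higher-rank $G$ and explicitly says that approach ``is not completed yet''; it does not support what you need.

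The paper's route to the fiber vanishing $\ind_{K_1}^{S^1}(Y_1,W_1)=0$ is quite different. For $\dim_{\C}Y_1=1$ it gives a direct analytic argument (Lemma~\ref{dim Y_1=1}): after passing to a cylindrical-end model, every bounded solution of $D_t's=0$ extends to a holomorphic or anti-holomorphic object on $\mathbb P^1$ and must vanish. For higher-dimensional $Y_1$, it embeds $Y_1$ as an affine chart in a weighted projective space $Z=(E_+\oplus\C\oplus E_-\setminus\{0\})/\C^*$, shows $\ind_{K_1}^{S^1}(Z,W_Z)=0$ via $H^{0,i}(Z,\mathcal O_Z)=0$ for $i>0$ and the nontriviality of $R$, and then applies the localization of Theorem~\ref{equivariant localization for invariant part} to the three components $Z_-\sqcup Z_0\sqcup Z_+$ of $Z^{S^1}$; the contributions $\ind_\pm$ are killed inductively using the one-dimensional case, forcing $\ind_0=\ind_{K_1}^{S^1}(Y_1,W_1)=0$.
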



\subsection{$S^1$-acyclic compatible system}
\label{$S^1$-acyclic compatible system} 

In this subsection we show that 
there is an open subset of $M$ on which we have 
a $K$-equivariant $S^1$-acyclic compatible system. 
To show it we first show the following.

\begin{lemma}\label{vanishing of cohomology}
For each $x\in M$ let $(L_{(x)}, \nabla_{(x)})$ be the restriction 
of the pull-back of $(L,\nabla)$ to $S^1\times\{x\}$ 
by the multiplication map $S^1\times M\to M$.  
There exists an $S^1\times K$-invariant open neighborhood $M'$ of $M^{S^1}$ such that 
for a each $x\in M'$ the $S^1$-invariant part of the de Rham cohomology 
with local coefficient $H^*(S^1\times \{x\};(L_{(x)}, \nabla_{(x)}))^{S^1}$ is zero. 
\end{lemma}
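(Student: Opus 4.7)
The plan is to reduce the vanishing of $H^*(S^1\times\{x\}, L_{(x)})^{S^1}$ to the non-vanishing of the moment of the $S^1$-action on $L$. First I observe that any smooth $S^1$-invariant section $s$ of $L_{(x)}$ is uniquely determined by the value $v := s(e) \in L_{(x)}|_e = L_x$ via the formula $s(g) = g\cdot v$, so that both $\Omega^0(S^1, L_{(x)})^{S^1}$ and $\Omega^1(S^1, L_{(x)})^{S^1}$ are canonically one-dimensional and isomorphic to $L_x$ (for $\Omega^1$ after fixing an $S^1$-invariant volume form on $S^1$). Since $S^1$ is compact, averaging identifies $H^*(S^1\times\{x\}, L_{(x)})^{S^1}$ with the cohomology of the resulting two-term invariant subcomplex.

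To compute the differential on this subcomplex, I would use a Kostant-type formula $\mathcal{L}_\xi = \nabla_{\xi^\#} + 2\pi\sqrt{-1}\,\mu_\xi$ defining a moment function $\mu_\xi \in C^\infty(M)$ for the infinitesimal generator $\xi$ of the $S^1$-action. Abelianness of $S^1$ makes $\mu_\xi$ automatically $S^1$-invariant, and commutativity of the $K$- and $S^1$-actions together with $(S^1\times K)$-invariance of $\nabla$ makes $\mu_\xi$ additionally $K$-invariant. For $s(g) = g\cdot v$ the corresponding section $\tilde s$ of $L$ along the $S^1$-orbit through $x$ satisfies $\mathcal{L}_\xi \tilde s = 0$, so Kostant gives $\nabla_{\xi^\#}\tilde s = -2\pi\sqrt{-1}\,\mu_\xi\,\tilde s$. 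Via the canonical identifications above this says that $\nabla_{(x)}\colon \Omega^0(S^1,L_{(x)})^{S^1} \to \Omega^1(S^1,L_{(x)})^{S^1}$ is multiplication by $-2\pi\sqrt{-1}\,\mu_\xi(x)$ on $L_x$; hence $H^*(S^1\times\{x\}, L_{(x)})^{S^1} = 0$ whenever $\mu_\xi(x) \neq 0$.

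To finish, I evaluate $\mu_\xi$ on $M^{S^1}$: at a fixed point $x_0$ one has $\xi^\#(x_0) = 0$, so the Kostant formula on $L_{x_0}$ reduces to $\mathcal{L}_\xi = 2\pi\sqrt{-1}\,\mu_\xi(x_0)\,\id$, showing that $\mu_\xi(x_0)$ equals the infinitesimal weight of the $S^1$-representation on the fiber $L_{x_0}$. The hypothesis that $L^{S^1}$ equals the image of the zero section forces this weight to be a non-zero integer, and connectedness of $M^{S^1}$ promotes it to a single constant $w \neq 0$ on all of $M^{S^1}$. Taking $M' := \{x \in M \mid \mu_\xi(x) \neq 0\}$ then yields the desired $S^1\times K$-invariant open neighborhood of $M^{S^1}$ on which the invariant cohomology vanishes fiberwise.

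The main technical obstacle will be the Kostant-formula computation identifying $\nabla_{(x)}$ on the invariants with multiplication by $-2\pi\sqrt{-1}\,\mu_\xi(x)$ uniformly in $x$. For $x\notin M^{S^1}$ the argument relies on genuine $S^1$-equivariance of $\tilde s$ along a nondegenerate orbit, while for $x_0\in M^{S^1}$ the orbit degenerates to a point and the same formula has to be re-derived from the non-trivial fiberwise weight; checking that these two regimes fit together into one continuous identification, valid pointwise in $x$, is the delicate step.
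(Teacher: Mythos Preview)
Your argument is correct, but it follows a genuinely different route from the paper's proof. The paper argues only at points $x\in M^{S^1}$: there the multiplication map $S^1\to M$ is constant, so $(L_{(x)},\nabla_{(x)})$ is the trivial flat bundle with fiber $L_x$, giving $H^*(S^1;L_{(x)})\cong H^*(S^1;\C)\otimes L_x$; since $S^1$ acts on $H^*(S^1;\C)$ trivially and on $L_x$ with nonzero weight, the invariant part vanishes. The existence of $M'$ is then obtained in one stroke from upper semi-continuity of the dimension of $(\ker)^{S^1}$ for the family of fiberwise de Rham operators. Your approach instead produces an explicit neighborhood $M'=\{\mu_\xi\neq 0\}$ and an if-and-only-if criterion for vanishing, which is strictly more information; the paper's argument is softer and shorter but gives only existence.

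Your stated ``main technical obstacle'' is not a real obstacle. The identity $\mathcal L_\xi=\nabla_{\xi^\#}+2\pi\sqrt{-1}\,\mu_\xi$ holds globally on $M$ as an equality of first-order operators on $\Gamma(L)$ (it is the \emph{definition} of the smooth function $\mu_\xi$, using only that the lift of the $S^1$-action preserves $\nabla$). Pulling back by the multiplication map $\Phi\colon S^1\times M\to M$ and restricting to $S^1\times\{x\}$, the same identity computes $(\Phi^*\nabla)_{\partial_\theta}$ on the invariant section $g\mapsto g\cdot v$ as $-2\pi\sqrt{-1}\,\mu_\xi(x)\,v$ for \emph{every} $x$, with no case distinction between $x\in M^{S^1}$ and $x\notin M^{S^1}$: at a fixed point the term $\nabla_{\xi^\#}$ simply vanishes and the weight on $L_x$ appears directly as $\mu_\xi(x)$, but this is the specialization of one global formula, not a separate regime to be matched. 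So the two-term invariant complex is $\bigl(L_x\xrightarrow{-2\pi\sqrt{-1}\,\mu_\xi(x)}L_x\bigr)$ uniformly in $x$, and continuity of $\mu_\xi$ is automatic.
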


\begin{proof}
For each $x\in M^{S^1}$ we have the canonical isomorphism 
$$
H^*(S^1\times \{x\};(L_{(x)}, \nabla_{(x)}))\cong H^*(S^1; \C)\otimes L_{(x)}, 
$$and the $S^1$-invariant part of the right hand side is zero 
because $L_{(x)}$ is a non-trivial representation of $S^1$ by our assumption. 
By the semi-continuity of the cohomology 
there exists an $S^1\times K$-invariant open neighborhood $M'$ of $M^{S^1}$ such that 
$H^*(S^1\times \{x\};(L_{(x)}, \nabla_{(x)}))^{S^1}$ is zero for all $x\in M'$. 
\end{proof}

Using the open subset $M'$ obtained in 
Lemma~\ref{vanishing of cohomology} we put $V:=M'\setminus M^{S^1}$. 
There exist the structure of a compatible fibration on $V$ and 
a $K$-equivariant $S^1$-acyclic compatible sytem on it
as in Section~\ref{The case of torus action}. 
\subsection{Product formula}\label{product formula}
In this subsection we recall a product formula of local indices, 
which we will use in the proof of Theorem~\ref{main S^1-vanishing}. 

Let $Y_0$ be a closed manifold equipped with a $K$-action and 
a $K$-invariant Hermitian structure. 
Let $L_0\to Y_0$ be a $K$-equivariant Hermitian line bundle with 
a $K$-invariant Hermitian connection. 
Define $W_0$ to be $W_0:=\wedge^{0,\bullet}T^*Y_0\otimes L_0$, 
which has a natural structure of a $K$-equivariant $\Z/2$-graded 
Clifford module bundle over $Y_0$. 
Let $K_1$ be a compact Lie group and $Q\to Y_0$ a 
$K$-equivariant principal $K_1$-bundle over $Y_0$. 
Let $Y_1$ be a unitaty representation of $K_1\times S^1$ 
such that $Y_1^{S^1}=\{0\}$. 
Let $R$ be a $1$-dimansional unitary representation of $S^1$ and 
$L_1$ the $K_1\times S^1$-equivariant line bundle $Y_1\times R\to Y_1$. 
Here $K_1$ acts on $R$ trivially 
and we consider the product connection on $L_1$. 
Define $W_1$ by $W_1:=\wedge^{0,\bullet}T^*Y_1\otimes L_1$, 
which has a natural structure of $K_1\times S^1$-equivariant $\Z/2$-graded 
Clifford module bundle over $Y_1$. 
We put $Y:=Q\times_{K_1}Y_1$ and $W:=W_0\otimes (Q\times_{K_1}W_1)$. 

Note that $Y_1$ has a structure of a $K$-equivariant $S^1$-acyclic compatible system by taking $M'=Y_1$ and $M^{S^1}=\{0\}$ 
in Lemma~\ref{vanishing of cohomology}. 
Using the product formula~\cite[Theorem~8.8]
{Fujita-Furuta-Yoshida2},  we have 
the following equality. 
\begin{proposition} \label{product formula for fixed point set}
$$
{\rm ind}_{K}^{S^1}(Y,W_Y)={\rm ind}_{K}(Y_0, W_0 \otimes 
(Q \times_{K_1} {\rm ind}_{K_1}^{S^1}(Y_1,W_1))). 
$$
\end{proposition}

\begin{remark}
The meaning of the right hand side in the above equality is as follows. 
As a character of $K_1$ we write ${\rm ind}_{K_1}^{S^1}(Y_1,W_1)$ as 
${\rm ind}_{K_1}^{S^1}(Y_1,W_1)=[F_0]-[F_1]$, where $F_0$ and $F_1$ are 
finite dimensionl representations of $K_1$. 
Then we put 
$$
{\rm ind}_{K}(Y_0, W_0 \otimes 
(Q \times_{K_1} {\rm ind}_{K_1}^{S^1}(Y_1,W_1))):=
{\rm ind}_{K}(Y_0, W_0 \otimes (Q \times_{K_1} F_0))-
{\rm ind}_{K}(Y_0, W_0 \otimes (Q \times_{K_1} F_1)). 
$$
\end{remark}

\subsection{Model of the neighborhood of $M^{S^1}$}
Now we come back to the setting in Subsection~\ref{$S^1$-acyclic compatible system}. 
Let $\nu \to M^{S^1}$ be the normal bundle of $M^{S^1}$ in $M'$. 
Then the fibers of $\nu$ are unitary representation of $S^1$. 
Since we assume that $M^{S^1}$ is connected 
they are mutually isomorphic unitary representations. 
We take and fix a copy $R_{\nu}$ of 
the unitary representation of $S^1$ on $\nu$. 
As in the same way we take and fix a copy $R_L$ of the 
one-dimensional unitary representation of $S^1$ on $L|_{M^{S^1}}$.  

Let $K_1$ be the group of unitary transformations of $R_{\nu}$ 
which commute with the $S^1$-action. 
Let $Q\to M^{S^1}$ be the $K$-equivariant principal $K_1$-bundle 
whose fiber $Q_x$ at $x\in M^{S^1}$ is defined by 
the set of isomorphisms between $R_{\nu}$ and $\nu_x$ as $S^1$-representations. 
Note that $\nu$ is equal to the associated vector bundle $Q\times_{K_1}R_{\nu}$.
Let $L_0$ be the $K$-equivariant Hermitian line bundle 
with connection over $M^{S^1}$ defined by 
$$
L_0:={\rm Hom}_{S^1}(R_L,L|_{M^{S^1}}). 
$$
Note that $L_0$ is abstractly isomorphic to $L|_{M^{S^1}}$ 
as Hermitian line bundles with connection  
but $L_0$ does not have $S^1$-action. 

Let $L_1$ be the $K_1\times S^1$-equivariant 
Hermitian line bundle with connection over $R_{\nu}$ defined by 
$$
L_1:=R_{\nu}\times R_{L} 
$$with the product conneciton. 
Note that $K_1$ acts trivially on the second factor. 
Let $(L_Y,\nabla_Y)$ be the Hermitian line bundle 
with connection over $\nu$ defined by 
$$
L_Y:=L_0\otimes (Q\times_{K_1}L_1)
$$
and $\nabla_Y$  the tensor product connection. 
Applying Proposition~\ref{product formula for fixed point set} 
for $Y_0=M^{S^1}$, $Y_1=R_{\nu}$, $Y=\nu$ and the associated $\Z/2$-graded 
equivariant Clifford module bundles we have 
\begin{equation}\label{model product}
{\rm ind}_{K}^{S^1}(\nu, W_{\nu})=
{\rm ind}_K(M^{S^1}, W_0\otimes (Q\times_{K_1}{\rm ind}_{K_1}^{S^1}(R_{\nu}, W_1))). 
\end{equation}

\subsection{Comparison with the model}
In this subsection we show the following 
in the setting in Subsection~\ref{$S^1$-acyclic compatible system}. 
\begin{proposition} \label{comparison}
$$
{\rm ind}_K^{S^1}(M',W|_{M'})={\rm ind}_K^{S^1}(\nu, W_{\nu}). 
$$
\end{proposition}
As as a corollary we have the following by (\ref{model product}). 
\begin{proposition}\label{product formula for fixed point set 2}
$$
{\rm ind}_K^{S^1}(M', W|_{M'})={\rm ind}_K(M^{S^1}, W_0\otimes (Q\times_{K_1}{\rm ind}_{K_1}^{S^1}(R_{\nu}, W_1))). 
$$
\end{proposition}

To show Proposition~\ref{comparison} we construct a 
one parameter family which connects two 
$K$-equivariant $S^1$-acyclic compatible systems 
on neighborhoods of $\nu^{S^1}=M^{S^1}$ in 
$(\nu, W_{\nu})$ and $(M',W|_{M'})$. 

Let $Y'$ be an $S^1\times K$-invariant tubular neighborhood of $\nu^{S^1}$ in $\nu$. 
We may assume the exponential map $\phi: Y'\to M'$ is a diffeomorphism. 
Let $g'_{M'}$ (resp. $J'_{M'}$) be the pull back of 
the Riemannanian metric $g_{M'}$ 
(resp. the almost complex structure $J_{M'}$) on $M'$ by $\phi$. 
On the other hand there is a Riemannian metric $g_{Y'}$ 
and a compatible almost complex strucutre $J_{Y'}$ 
on $\nu$ induced by the one in $TM$. 
Note that $g'_{M'}$ (resp. $J'_{M'}$) coincides with $g_{Y'}$ (resp. $J_{Y'}$) 
on the zero section $\nu^{S^1}=M^{S^1}$.  

For $t\in [0,1]$ we define a family $(g_t, J_t, L_t,\nabla_t)$ on $Y'$ 
which connects $(g'_{M'}, J'_{M'}, \phi^*L|_{M'},\phi^*\nabla|_{M'})$ and 
$(g_{Y'}, J_{Y'}, L_Y|_{Y'},\nabla_Y|_{Y'})$. 
The Riemannian metric $g_t$ is defined by $g_t:=tg'_{M'}+(1-t)g_{Y'}$. 
Note that for each $y\in Y'$ 
the set of almost complex structures of $T_yY'$ which are 
compatible with $g_t$ is a closed submanifold of ${\rm End}(TY')_y$. 
We can take $Y'$ small enough so that 
the endmorphism $J'_t:=tJ'_{M'}+(1-t)J_{Y'}$ 
is contained in a small normal disk bundle of the above 
closed submanifold with respect to the metric $g_t$, and 
there exists the unique compatible almost complex structure 
$(J_t)_y$ which minimizes the distance from $(J'_t)_y$ in ${\rm End}(TY')_y$. 
Then $J_t=\{(J_t)_y\}_{y\in Y'}$ is the reqiured 
family of almost complex structures on $Y'$ compatible with $g_t$. 
Let $\phi_t:Y'\to M'$ be the map defined by $\phi_t(v):=\phi(tv)$, 
which connects the projection $\phi_0:Y'\to M^{S^1}$ and 
the exponential map $\phi_1=\phi:Y'\to M'$. 
Using this family of maps we put 
$(L_t,\nabla_t):=\phi_t^*(L,\nabla)|_{M'}$. 
We denote the Hermitian manifold $Y'$ with the 
Hermitian structure $(g_t, J_t)$ by $Y_t$. 
Then we have a family of $S^1\times K$-equivariant $\Z/2$-graded 
Clifford module bundle $W_t$ defined by 
$W_t:=\wedge^{0,\bullet}T^*Y_t\otimes L_t$. 

\begin{lemma}\label{vanishing of cohomology for family}
For each $(y,t)\in Y\times [0,1]$ let $(L_{(y,t)},\nabla_{(y,t)})$ be 
the restriction of the pull-back of 
$(L\times [0,1],\nabla)$ to $S^1\times\{(y,t)\}$ by the multiplication map 
$S^1\times Y\times [0,1]\to Y\times [0,1]$. 
There exixts an $S^1\times K$-invariant open neighborhood 
$Y''$ of $Y^{S^1}$ in $\nu^{S^1}$ such that for each 
$(y,t)\in (Y''\setminus Y^{S^1})\times [0,1]$ 
the $S^1$-invariant part of the de Rham cohomology 
$H^*(S^1;(L_{(y,t)},\nabla_{(y,t)}))^{S^1}$ is zero. 
\end{lemma}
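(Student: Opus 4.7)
The plan is to treat this as a one–parameter version of Lemma~\ref{vanishing of cohomology}: reduce to the fibres over the fixed set $Y^{S^1}$ (the zero section of $\nu$), where the cohomology can be computed explicitly, and then extend to a neighbourhood via a semicontinuity/compactness argument applied now to the compact parameter space $Y^{S^1}\times [0,1]$ instead of just $M^{S^1}$.

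First I would verify that the hypothesis of Lemma~\ref{vanishing of cohomology} holds pointwise along the family at every $t$ on $Y^{S^1}$. By construction $\phi_t(v)=\phi(tv)$, and since $\phi$ identifies the zero section $\nu^{S^1}$ with $M^{S^1}\subset M'$, we have $\phi_t(y)=y$ for every $y\in Y^{S^1}$ and every $t\in[0,1]$. Consequently the pair $(L_{(y,t)},\nabla_{(y,t)})$ is, for such $y$, simply the pull-back of the $S^1$-equivariant fibre $(L|_y,\nabla|_y)$ under the orbit map $S^1\times\{y\}\to M'$. The base point $y$ being $S^1$-fixed, the same argument as in Lemma~\ref{vanishing of cohomology} gives
\[
H^*(S^1\times\{y\};(L_{(y,t)},\nabla_{(y,t)}))\cong H^*(S^1;\C)\otimes L|_y,
\]
and the hypothesis $L^{S^1}=M^{S^1}$ (so that $L|_y$ is a non-trivial $S^1$-representation for every $y\in M^{S^1}$) forces the $S^1$-invariant part to vanish.

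Next I would upgrade this pointwise vanishing to a neighbourhood. The family $\{(L_t,\nabla_t)\}_{t\in[0,1]}$ patches together into a single $S^1\times K$-equivariant Hermitian line bundle with connection on $Y'\times[0,1]$, and the assignment
\[
(y,t)\longmapsto \dim H^*(S^1\times\{y\};(L_{(y,t)},\nabla_{(y,t)}))^{S^1}
\]
is upper semicontinuous in $(y,t)$. By the first step it vanishes identically on the compact subset $Y^{S^1}\times[0,1]$, so an open neighbourhood of this set in $Y'\times[0,1]$ is contained in its zero locus. Since $[0,1]$ is compact we can shrink to a product $Y''\times[0,1]$ with $Y''$ an $S^1\times K$-invariant open neighbourhood of $Y^{S^1}$, as required.

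The only subtle point is justifying the semicontinuity for the $S^1$-invariant part of a family of local-coefficient cohomologies; the plain Euler characteristic argument is not enough. I expect the cleanest route is to identify these cohomologies with the kernel and cokernel of a continuous family of elliptic operators on the circle (the twisted de Rham operator on $S^1$ with flat connection depending continuously on $(y,t)$), decompose everything according to the compact $S^1$-action, and invoke upper semicontinuity of the dimension of the kernel of the $S^1$-invariant part of an elliptic family. With that in hand, the remainder of the argument is routine continuity of connections under the family $\phi_t$.
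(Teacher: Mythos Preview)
Your proof is correct and is essentially the same as the paper's, whose entire argument is the single line ``This lemma follows from Lemma~\ref{vanishing of cohomology} for $M:=Y'\times [0,1]$.'' You have simply unpacked that reference in detail, including the tube-lemma step to obtain a product neighbourhood and the justification of semicontinuity for the $S^1$-invariant part that the paper leaves implicit.
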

\begin{proof}
This lemma follows from Lemma~\ref{vanishing of cohomology} for 
$M:=Y'\times [0,1]$.  
\end{proof}

\begin{proof}[Proofof Proposition~\ref{comparison}]
By Lemma~\ref{vanishing of cohomology for family} and 
the same construction in Lemma~\ref{G-acyclicity} 
we have a one parameter family of $K$-equivariant $S^1$-acyclic system on $Y''$. 
Then the propsition follows from the deformation invariance of the index and the exicition prpoerty. 
\end{proof}

\subsection{Key proposition}
Recall the data $K_1$, $R$ and $(Y_1, L_1)$ 
considered in Subsection~\ref{product formula}. 
In the next subsection we will show the following proposition 
which is a special case of Theorem~\ref{main S^1-vanishing}. 
\begin{proposition}\label{special case}
$$
{\rm ind}_{K_1}^{S^1}(Y_1,W_1)=0. 
$$
\end{proposition}
Theorem~\ref{main S^1-vanishing} can be proved 
by using Proposition~\ref{special case} as follows. 

\begin{proof}[Proof of Thereom~\ref{main S^1-vanishing} 
assuming Proposition~\ref{special case}]
By taking $Y_1=R_{\nu}$ in Proposition~\ref{special case}
we have ${\rm ind}_K^{S^1}(M',W|_{M'})=0$ by 
Proposition~\ref{product formula for fixed point set 2}. 
\end{proof}

\subsection{Proof of the key proposition:one-dimensional case}
\label{one-dimensional case}
In this subsection we give the proof of Proposition~\ref{special case} 
in the case of $\dim Y_1=1$. 
All the following construction can be carried out $K_1$-equivariantly. 
We fix an isomorpshim $z:Y_1\to \C$ as Hermitian vector spaces. 
Define 
$
w:Y_1\setminus \{0\}\to \C/2\pi\sqrt{-1}\Z, \ 
\sigma : Y_1\setminus \{0\}\to\R \ {\rm and} \ 
\theta : Y_1\setminus \{0\} \to \R/2\pi\Z 
$ by $w=\log z=\sigma+\sqrt{-1}\theta$. 
Let $Y'_1$ be a manifold $Y_1$ whose complex structure is given by 
$z$ and K\"ahler metric is given by $|dz|=\sqrt{2}$ on $\sigma<-1$ and 
$|dw|=\sqrt{2}$ on $\sigma>1$. 
Let $D'$ be the Dolbeault operator on 
$W'_1=\wedge^{0,\bullet}T^*Y_1'\otimes R_1$. 
We consider an $S^1$-acyclic comptible sytem on $V:=\{\sigma>0\}$ 
defined by $S^1$-orbits and the de Rham operator $D'_V$ along fibers. 
Note that as in Remark~\ref{S^1-case} we use the open covering consisting of 
the single open set $V$ to define the structure of a compatible fibration.  
Fix a positive number $\rho_{\infty}$. 
Let $\rho:\R\to [0,\infty)$ be a non-negative smooth function 
such that $\rho(t)=0$ for $t<0$ and $\rho(t)=\rho_{\infty}$ for $t\gg 0$. 
For $t\ge 0$ we define a self adjoint elliptic operator $D_t'$ by 
$$
D_t':=D'+t\rho D_{V}':\Gamma(W')\to \Gamma(W'). 
$$
\begin{lemma}\label{dim Y_1=1}
For all $t\ge 0$ a section $s\in \Gamma(W')$ satisfies $D_t's=0$ and 
$\lim_{\sigma\to \infty}s=0$ if and only of $s=0$. 
\end{lemma}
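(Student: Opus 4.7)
The plan is to exploit the $S^1$-symmetry of the setup and reduce to ODEs in $\sigma$ by Fourier decomposition in $\theta$. Since the K\"ahler metric on $Y_1'$ is rotationally symmetric in each region, $D_V'$ is built from the $\partial_\theta$-direction, and $\rho=\rho(\sigma)$ depends only on $\sigma$, the operator $D_t'$ commutes with the rotation action of $S^1$ on $Y_1'$. Decomposing $s=\sum_{n\in\Z} s_n$ with $s_n\propto e^{in\theta}$, both the equation $D_t' s=0$ and the decay condition $s\to 0$ at $\sigma\to\infty$ respect this decomposition, so it suffices to show $s_n\equiv 0$ for every $n\in\Z$.

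On the cylindrical end $\sigma>1$ with $\rho=\rho_\infty$, writing $s_n=(f_n(\sigma),g_n(\sigma)\,d\bar w/\sqrt 2)e^{in\theta}$ in the orthonormal frame $\{1,d\bar w/\sqrt 2\}$ of $W_1'$, a direct computation of $D'$ and of the Clifford action of $d\theta$ defining $D_V'$ shows that the equation $D_t' s_n=0$ decouples into
\[
f_n'(\sigma)=n(1+t\rho_\infty)\,f_n(\sigma), \qquad g_n'(\sigma)=-n(1+t\rho_\infty)\,g_n(\sigma).
\]
The solutions are pure exponentials in $\sigma$, and the decay condition at $\sigma\to+\infty$ is satisfied only by $f_n$ with $n<0$ or $g_n$ with $n>0$; the modes $f_0,g_0$ are constant and must vanish. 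On the disk region $\sigma<-1$ with flat metric and $\rho=0$, $D_t'=D'$ is the standard Dolbeault Dirac operator on $\C$, whose kernel on sections smooth at $z=0$ consists of pairs $(f,\tilde h\,d\bar z/\sqrt 2)$ with $f$ holomorphic and $\tilde h$ antiholomorphic. Expanding in Fourier modes and converting to the cylindrical frame via $d\bar z=\bar z\,d\bar w$ (which shifts the $\theta$-index by one), smoothness at $z=0$ forces $f_n\equiv 0$ for $n<0$ and $g_n\equiv 0$ for $n\ge 0$.

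Combining the two sets of constraints mode by mode yields incompatible conditions: for $f_n$, decay requires $n<0$ while smoothness requires $n\ge 0$; for $g_n$, decay requires $n>0$ while smoothness requires $n\le -1$. Hence in each Fourier mode the only compatible solution of the first-order system is the trivial one. By ODE uniqueness in $\sigma$ applied to the first-order linear system, the vanishing of $f_n$ and $g_n$ established on either the cylindrical or the disk region propagates through the transitional region $-1\le\sigma\le 1$, where the metric and $\rho$ interpolate and the clean decoupling of $f_n$ from $g_n$ may break. This gives $s_n\equiv 0$ for every $n$, and therefore $s=0$. The main technical subtlety is exactly this intermediate region, but the approach avoids analyzing it directly by invoking ODE uniqueness to carry the zero from either boundary region to the whole line.
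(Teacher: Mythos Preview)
Your Fourier-in-$\theta$ strategy is sound and genuinely different from the paper's proof, but there is a gap exactly where you flag the difficulty. You say that in the transitional region the decoupling of $f_n$ and $g_n$ ``may break'', and then propose to carry ``the vanishing of $f_n$ and $g_n$'' through that region by ODE uniqueness for the first-order linear \emph{system}. But for $n\neq 0$ you have only established $f_n=0$ on one side and $g_n=0$ on the other; the pair $(f_n,g_n)$ is not known to vanish on either side, so uniqueness for a coupled $2\times 2$ initial-value problem gives nothing. As written, you are facing a boundary-value problem with one scalar condition at each endpoint, and such problems can have nontrivial solutions.

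The fix is immediate once you observe that $D_t'$ is odd for the $\Z/2$-grading on $W'=W'^{0}\oplus W'^{1}$: both $D'=\bar\partial+\bar\partial^{*}$ and $D_V'$ interchange degrees, so $D_t' s=0$ splits \emph{globally} into $(D_t')^{+}s^{0}=0$ and $(D_t')^{-}s^{1}=0$, not only on the regions where the metric is standard. After Fourier decomposition each of these is a scalar first-order ODE in $\sigma$ (ellipticity of $D_t'$ makes the leading coefficient nonvanishing), so $f_n$ vanishing on any interval forces $f_n\equiv 0$, and likewise for $g_n$. With this one observation your mode-by-mode incompatibility argument goes through cleanly for all $t\ge 0$ at once.

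For comparison, the paper argues quite differently. It first treats $t=0$ by pure complex analysis: a degree-zero solution is a bounded entire function, hence zero by Liouville, while a degree-one solution $f(\bar z)\,d\bar z$ is shown, via the substitution $u=1/z$ and Riemann's removable singularity theorem, to extend to an antiholomorphic $1$-form on $\mathbb{P}^1$ and hence vanishes. For $t>0$ the paper introduces a new coordinate $\hat\sigma$ with $d\hat\sigma/d\sigma=1+t\rho(\sigma)$ and an explicit bundle map $\phi$ satisfying $(1+t\rho)D''=\phi\circ D_t'\circ\phi^{-1}$ for the Dolbeault operator $D''$ in the new coordinate, thereby reducing to the case $t=0$. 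Your approach avoids this coordinate trick and treats all $t$ uniformly; the paper's approach avoids any ODE bookkeeping and needs no discussion of the transitional metric.
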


\begin{proof}
We first give the proof for $t=0$. 
If $s\in \Gamma(W')$ is a degree zero section such that $D's=0$ 
and $\lim_{\sigma\to \infty}s=0$,  
then $s$ is a bounded holomorphic section, and hence $s=0$. 
Note that if a degree one section $s$ of $W$ satisfies $D's=0$, then  
$s$ has a form $s=f(\bar z)d\bar z$ for some anti-holomorphic function $f$. 
We show that if $s=f(\bar z)d\bar z$ satisfies 
$\lim_{\sigma\to \infty}s=0$, then $s=0$. 
Since we have $s=f(\bar z)d\bar z=f(\bar z)\bar z d\bar w$ and 
$s$ converges to zero at infinity, 
$f(\bar z)\bar z$ converges to zero at infinity. 
We put $u:=e^{-w}=1/z$ and $h(u):=f(z)z$. 
Then $h(u)$ has zero of order at least one at $u=0$, and hence, 
$$
s=f(\bar z)d\bar z=\frac{h(\bar u)}{\bar u}d\bar u
$$ is an anti-holomorphic 1-form on the one point compactification ${\mathbb P}^1$ 
of $Y'$ by Riemann's removable singularity theorem. 
Then we have $s=0$. 

We reduce the case $t>0$ to the case $t=0$. 
Let $\hat\sigma$ be a  smooth increasing function on $\sigma$ such that 
$\hat\sigma(\sigma)=\sigma$ for $\sigma<0$ and 
$$
\frac{d\hat\sigma}{d\sigma}=1+t\rho(\sigma).
$$
Let $\hat z:Y_1\to \C$ be a coordinate function defined by 
$$
\hat z=e^{\hat w}, \ \hat w=\hat \sigma+\sqrt{-1}\theta. 
$$
Let $Y''$ be a manifold $Y_1$ whose complex structure is given by 
$\hat z$ and K\"ahler metric is given by $|d\hat z|=\sqrt{2}$ 
on $\sigma<-1$ and $|d\hat w|=\sqrt{2}$ on $\sigma>1$. 
Let $D''$ be the Dolbeault operator on 
$W''_1=\wedge^{0,\bullet}T^*Y''\otimes R_1$. 
Since K\"ahler structures of $Y'$ and $Y''$ can be identified on $\{\sigma<0\}$,
$W'$ and $W''$ can be also identified on $\{\sigma<0\}$. 
We extend this isomorphism as $\phi : W'\to W''$ by defining 
$1\mapsto 1$ on the degree zero part and 
$\bar w\mapsto \overline{\hat w}$ on the degree one part. 
By the direct computation one can check that 
$$
(1+t\rho)D''=\phi\circ D_t'\circ \phi^{-1}. 
$$
Then $D_t's=0$ and $\lim_{\sigma\to\infty}s=0$ 
if and only if $D''(\phi(s))=0$ and $\lim_{\sigma\to \infty}\phi(s)=0$.
From the above argument for $t=0$ we have $\phi(s)=0$ and 
hence $s=0$. 
\end{proof}

Using Lemma~\ref{dim Y_1=1} Proposition~\ref{special case} 
can be proved as follows. 
\begin{proof}[Proof of Proposition~\ref{special case}]
By the deformation invariance of local indices we have 
${\rm ind}(Y_1, W_1)={\rm ind}(Y'_1,W'_1)$. 
Note that ${\rm ind}(Y'_1,W'_1)$ is defined as the super-dimension of 
the space of $L^2$-solutions of the equation $D_t's=0$. 
On the other hand any $L^2$-solution of $D_t's=0$ 
satisfies the boundary condition $\lim_{\sigma\to \infty}s=0$, and hence, 
we have ${\rm ind}(Y'_1,W'_1)=0$ by Lemma~\ref{dim Y_1=1}. 
In particular we have ${\rm ind}_K^{S^1}(Y_1, W_1)=0$. 
\end{proof}
\subsection{Proof of the key proposition:general case}
In this subsection we give the proof of Proposition~\ref{special case} 
for general $Y_1$. Since we assume $Y_1^{S^1}=\{0\}$ 
we have the decomposition of $Y_1$ into 
the positive weight part $E_+$ and the negative weight part $E_-$ 
as a representation of $S^1$, 
$$
Y_1=E_+\oplus E_-.
$$
Define a unitarty representation $E$ of $S^1$ by 
$$
E:=E_+\oplus \C\oplus E_-, 
$$where we consider the trivial $S^1$-action on $\C$. 
Now we define a $\C^*$-action on $E$ as follows: 
$\C^*$ acts by the complexification of the $S^1$-action on $E_+$ , 
the complexifiction of the $S^1$-action defined by $g\mapsto g^{-1}$ on $E_-$
and the standard multiplication of weight $1$ on $\C$. 
Note that all weights of the $\C^*$-action on $E$ are positive and 
the $\C^*$-action commutes with the $S^1$-action. 
The quotient space 
$$
Z:=(E\setminus \{0\})/\C^*
$$is a weighted projective space with the induced $K_1\times S^1$-action. 
\begin{lemma}\label{weighted vanishing}
\begin{enumerate}
\item
We have 
$$
H^{0,i}(Z,\CO_Z)=0 \qquad (i >0)
$$for the structure sheaf ${\mathcal O}_Z$ of $Z$. 
\item
Let $R$ be a non-trivial $1$-dimensional representation of $K_1\times S^1$. 
Then for $W_Z=\wedge^{0,\bullet} T^* Z \otimes R$ we have 
$$
{\rm ind}_{K_1}^{S^1}(Z,W_Z)=0 \in R(K_1). 
$$
\end{enumerate}
\end{lemma}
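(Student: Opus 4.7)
The plan is to prove (1) by a direct weight computation on the principal $\C^*$-bundle $p\colon E\setminus\{0\}\to Z$ arising from the construction of $Z$, and then to deduce (2) immediately by combining (1) with the identification of $\ind_{K_1\times S^1}(Z,W_Z)$ with an equivariant Dolbeault Euler characteristic provided by Theorem~\ref{local index}(2), since $Z$ is closed.

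For (1), I would first identify $\CO_Z$ with the $\C^*$-invariant (weight-zero) part of $p_*\CO_{E\setminus\{0\}}$, giving
$$
H^i(Z,\CO_Z)=H^i(E\setminus\{0\},\CO)^{\C^*}.
$$
The right-hand side is then computed by Cech cohomology on the affine cover $\{x_j\ne 0\}_{j=0}^N$ of $E\setminus\{0\}=\C^{N+1}\setminus\{0\}$: only $H^0=\C[x_0,\ldots,x_N]$ and $H^N$ are non-zero, with $H^N$ spanned by Laurent monomials $x_0^{-a_0-1}\cdots x_N^{-a_N-1}$ for $a_j\ge 0$. Because every $\C^*$-weight $w_j$ on $E$ is strictly positive by construction, the only weight-zero element of $H^0$ is a constant, and every generator of $H^N$ has strictly negative weight $-\sum_j(a_j+1)w_j$. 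Hence $H^0(Z,\CO_Z)=\C$ (with trivial $K_1\times S^1$-action) and $H^i(Z,\CO_Z)=0$ for $i>0$.

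For (2), since $Z$ is closed, Theorem~\ref{local index}(2) identifies $\ind_{K_1\times S^1}(Z,W_Z)$ with the equivariant index of the twisted Dolbeault--Dirac operator on $W_Z$, which as a virtual $K_1\times S^1$-representation equals
$$
\chi(Z,R)=\sum_i(-1)^iH^i(Z,\CO_Z)\otimes R.
$$
By (1) together with the trivial action on the constants $H^0(Z,\CO_Z)=\C$, this collapses to $R$. Taking $S^1$-invariants then gives $\ind_{K_1}^{S^1}(Z,W_Z)=R^{S^1}=0$, since $R$ has non-trivial $S^1$-weight, the intended meaning of ``non-trivial'' in the lemma.

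The main obstacle I anticipate is verifying that the local-index framework of Section~2 really does reproduce the (orbifold) Dolbeault Euler characteristic $\chi(Z,R)$ on the weighted projective space $Z$: since $Z$ has cyclic quotient singularities, Theorem~\ref{local index}(2) must be interpreted in the orbifold sense, and one must check that the natural compatible fibration on $Z$ coming from its $\C^*$-quotient description, together with the Dolbeault--Dirac operator on $W_Z$, fits that setup and yields the stated identification. Granting this, (1) and (2) combine exactly as described.
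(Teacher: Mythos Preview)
Your argument is correct; part (2) is essentially identical to the paper's, while your part (1) follows a genuinely different route. The paper proves (1) by showing that the canonical bundle of $Z$ is negative and invoking the Bochner trick: it equips the trivial line bundle on $E$ with a Hermitian metric whose curvature is the standard symplectic form, identifies the complex quotient $Z$ with the symplectic reduction $\mu^{-1}(0)/S^1$, and obtains $\det TZ\cong L_Z^d$ for the induced positive prequantum line bundle $L_Z$ and $d=\sum_j w_j>0$. Your approach instead computes $H^i(Z,\CO_Z)$ directly as the weight-zero part of $H^i(E\setminus\{0\},\CO)$ via the standard \v{C}ech computation on $\C^{N+1}\setminus\{0\}$, using only the positivity of the $\C^*$-weights $w_j$. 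This is more elementary and avoids K\"ahler geometry entirely; the paper's argument, on the other hand, simultaneously establishes the K\"ahler structure on $Z$ that it uses in part (2) and in the surrounding localization argument. Regarding your stated obstacle: the paper treats it exactly as you do, invoking Hodge theory on the closed orbifold $Z$ without further comment to identify $\ind_{K_1\times S^1}(Z,W_Z)$ with $\sum_i(-1)^iH^{0,i}(Z,\CO_Z)\otimes R$, so your level of justification matches the paper's. Your reading of ``non-trivial'' as non-trivial $S^1$-weight is also the paper's intended meaning.
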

\begin{proof} 
(1)  
The first statement follows from the negativity of 
the canonical bundle $\det T^*Z$ of $Z$ and the Bochner trick.
We can show the negativity by the following (standard) argument.
Let $L_E$ be the trivial line bundle $E\times\C$ with 
a $\C^*$-action and the product holomorphic structure. 
Here 
$\C^*$ acts by weight one on the fiber $\C$. 
Let $S^1$ be the unit circle in $\C^*$. 
We define an $S^1$-invariant Hermitian structure on $L_E$ by 
$|\underline{v}|^2:=e^{-\pi r^2}|v|^2$ for a constant section with value 
$v\in \C$, where $r:E\to \R$ is the distance from the origin of $E$. 
Combining with the holomorphic structure and this Hermitian structure 
of $L_E$  we have the $S^1$-invariant Hermitian connection 
$\nabla_E$ on $L_E$ whose $(0,1)$-part coincides with 
the Dolbeault operator of $L_E$ and the curvature form coincides with 
the symplectic form on $E$. 
Since the $S^1$-action on $E$ lifts to $(L_E,\nabla)$, 
the $S^1$-action is Hamiltonian. 
Let $\mu:E\to \sqrt{-1}\R$ be its moment map. 
Note that $\mu$ is proper because all weights of the $S^1$-action are positive. In this setting the complex quotient $Z$ can be identified with 
the symplectic quotient $\mu^{-1}(0)/S^1$. 
In particular $Z$ is a K\"ahler manifold with 
the induced prequantizing line bundle $(L_Z,\nabla_Z)$, and we have 
an isomorphic as holomorphic Hermitian line bundles 
$$
\det TZ\cong (E\setminus\{0\})\times_{\C^*}\det E\cong L_Z^d, 
$$where $d>0$ is the weight of $S^1$-action on $\det E$. 
Since $L_Z$ is positive, the canonical bundle $\det T^*Z$ is negative. 

(2) 
Let $R$ be a non-trivial $1$-dimensional representation of $K_1\times S^1$. 
Since the K\"ahler structure of $Z$ is $K_1\times S^1$-equivariant we have 
a $K_1\times S^1$-equivariant isomorphism 
$$
H^{0,i}(Z,\CO_Z\otimes R)\cong H^{0,i}(Z,\CO_Z)\otimes R
$$for $i\ge 0$. By the first statement and the Hodge theory we have 
$$
{\rm ind}_{K_1\times S^1}(Z,W_Z)=\sum_{i}(-1)^iH^{0,i}(Z,\CO_Z\otimes R)=R
\in R(K_1\times S^1).
$$
Since $R$ is a non-trivial representaion of $S^1$ we have 
${\rm ind}^{S^1}_{K_1}(Z,W_Z)=0$. 
\end{proof}

Note that we have a decomposition of the fixed point set $Z^{S^1}$ as 
$$
Z^{S^1}=Z_-\sqcup Z_0 \sqcup Z_+, 
$$where we put 
$Z_-:=(E_+\setminus\{0\})/\C^*$, $Z_0:=(\C\setminus\{0\})/\C^*$ 
and $Z_+:=(E_-\setminus\{0\})/\C^*$. 
Then by the localization formula Theorem~\ref{equivariant localization for invariant part}, ${\rm ind}_{K_1}^{S^1}(Z,W_Z)$  
is equal to the sum of the contribution from $Z_-$, $Z_0$ and $Z_+$: 
$$
{\rm ind}_{K_1}^{S^1}(Z,W_Z)={\rm ind}_-+{\rm ind}_0+{\rm ind}_+. 
$$
By definition the contribution ${\rm ind}_0$ from $Z_0$ 
is equal to ${\rm ind}_{K_1}^{S^1}(Y_1,W_1)$ and hence we have 
$$
{\rm ind}_{K_1}^{S^1}(Y_1,W_1)=-({\rm ind}_-+{\rm ind}_+)
$$from Lemma~\ref{weighted vanishing}. 
Then we can show Proposition~\ref{special case} as follows. 
\begin{proof}[Proof of Proposition~\ref{special case}]
It is enough to show ${\rm ind}_-={\rm ind}_+=0$. 

(0) 
If the complex codimension of $M^{S^1}$ in $M$ is $1$, then 
we have ${\rm ind}_K^{S^1}(M',M'\setminus M^{S^1})=0$ 
by the product formula (Proposition~\ref{product formula for fixed point set}) 
and Proposition~\ref{dim Y_1=1}.

(1) If $E_-=0$, then we have ${\rm ind}_-=0$ and 
the codimension of $Z_+$ is one.  
In this case we have ${\rm ind}_+=0$ from the above case (0). 

(2) If $E_+=0$, we have ${\rm ind}_-={\rm ind}_+=0$ 
as in the same way for (1). 

(3) In the  general case note that all the weights of the $S^1$-action on the 
normal direction on $Z_-$ are positive.  
By (1) and the product formula we have ${\rm ind}_-=0$. 
As in the same way we have ${\rm ind}_+=0$ by (2) and the product formula. 
\end{proof}

\section{Localization of equivariant Riemann-Roch numbers}
Let $(M,\omega)$ be a closed prequantized symplectic manifold 
with $G$-equivariant prequantizing line bundle $(L,\nabla)$, 
i.e., $L$ is a Hermitian line bundle over $M$ with a Hermitian connection $\nabla$ whose curvature form is equal to $-2\pi\sqrt{-1}\omega$, and all these data are $G$-equivariant.
Let $G$ be a torus and $K$ a compact Lie group. 
Suppose that $G\times K$ acts effectively on $(M,\omega)$ 
and the $G\times K$-action on $M$ lifts to $L$ 
preserving  $\nabla$ and the Hermitian metric of $L$. 
Then, the action is Hamiltonian and each $G$-orbit is an isotropic torus in $M$. We denote the moment map for $G$-action associated with the lift by $\mu_G:M\to \g^*$. 
We assume that $0\in \g^*$ is a regular value of $\mu$. 

For these data one can define the $G\times K$ 
equivariant Riemann-Roch number $RR_{G\times K}(M,L)$ as 
the index of the Dolbeault operator whose coefficient is in $L$. 
We denote its $G$-invariant part by $RR_K^G(M,L)$. 
Note that $RR_K^G(M,L)$ is an element of the character ring $R(K)$ of $K$.

On the other hand we have the quotient space 
$M_G=\mu^{-1}_G(0)/G$, the symplectic reduction of $M$ at $0$.  
Since we assume that $0$ is a regular value 
$M_G$ is a closed symplectic orbifold and 
has the natural induced $K$-action and the $K$-equivariant 
prequantizing line bundle $(L_G,\nabla_G)=(L|_{\mu^{-1}_G(0)},\nabla|_{\mu^{-1}_G(0)})/G$.  
In this section we show the following theorem 
by induction on dimension of $G$. 

\begin{theorem}\label{equiv[Q,R]=0}
$$
RR_{K}^G(M,L)=RR_K(M_G,L_G)\in R(K). 
$$
\end{theorem}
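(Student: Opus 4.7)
The plan is induction on $\dim G$, with $G = S^1$ as the substantive base case. For the inductive step, one splits $G = S^1 \times G'$ (or picks a suitable subcircle $H \subset G$), applies the $S^1$-case with $K$ enlarged to $K \times G'$, and then takes $G'$-invariants and applies the inductive hypothesis to the $G'$-action on the reduced space $M_{S^1}$, using $(M_{S^1})_{G'} = M_G$ and the corresponding identification of prequantum line bundles.

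For $G = S^1$, I would first observe, via Lemma~\ref{G-cohomology} and the prequantum condition, that the restriction of $(L, \nabla)$ to an $S^1$-orbit has no $S^1$-invariant parallel sections exactly when $\mu \notin \Z$ on that orbit. Hence the construction of Section~\ref{The case of torus action} applied to $V := M \setminus \mu^{-1}(\Z)$ yields a $K$-equivariant $S^1$-acyclic compatible system on $V$. Since $M$ is compact, $\mu^{-1}(\Z)$ is a finite disjoint union of level sets $\mu^{-1}(n)$, and I would apply Theorem~\ref{equivariant localization for invariant part} with a disjoint $S^1 \times K$-invariant admissible cover $\{O_n\}$ of $M \setminus V$ to obtain
\[
RR_K^{S^1}(M, L) = \sum_{n \in \mu(M) \cap \Z} \ind_K^{S^1}(O_n,\, O_n \cap V,\, W|_{O_n}).
\]

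The main computational step is to show that only $n = 0$ contributes, and that its contribution equals $RR_K(M_G, L_G)$. Near $\mu^{-1}(n)$ with $n \neq 0$, the fixed-point portion vanishes by Theorem~\ref{main S^1-vanishing}; the locally free portion is modelled by an $S^1$-equivariant symplectic tubular neighborhood as a bundle $Q \times_{S^1} U$ with $U \subset T^*S^1$ open, where the prequantum bundle corresponds (after the moment-map shift by $n$) to $L(n)$ in the notation of Section~\ref{cotangent bundle case}; Proposition~\ref{product formula for fixed point set} together with Proposition~\ref{index of T^*S^1} then show that the character $n$ of $S^1$ has vanishing $S^1$-invariant part. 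At $n = 0$, since $0$ is a regular value, $\mu^{-1}(0) \to M_G$ is a (possibly orbifold) principal $S^1$-bundle, and $O_0$ is $S^1 \times K$-equivariantly identified with $\mu^{-1}(0) \times_{S^1} U$ with $L$ corresponding to $L(0)$; applying Proposition~\ref{product formula for fixed point set} and using $\ind_{S^1}^{S^1}(T^*S^1, W(0)) = 1$ from Proposition~\ref{index of T^*S^1} yields $\ind_K^{S^1}(O_0, \ldots) = RR_K(M_G, L_G)$.

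The hardest part will be arranging the inductive step so that $0$ is a regular value of the moment map $\mu_H$ of the chosen subcircle $H$; this can be done by picking the primitive generator of $H$ in $\g_\Z$ generically so as to avoid the finitely many rational hyperplanes $\xi^\perp$ as $\xi$ ranges over the $G$-moment map values at $G$-fixed points (none of which is zero, since $0$ is regular for $\mu_G$). A secondary difficulty is carrying out the symplectic neighborhood identification and product formula application rigorously in the orbifold setting of Section~\ref{The case of torus action}, where $\mu^{-1}(0)$ may carry only a locally free $S^1$-action with finite isotropies; this is precisely what the compatible fibration framework was designed to accommodate, so no new machinery should be needed beyond a careful bookkeeping.
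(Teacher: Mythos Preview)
Your overall strategy --- induction on $\dim G$ with the $S^1$-case as the base, localization to integer moment-map levels, product-formula computation at level $0$, and a generic choice of subcircle for the inductive step --- is exactly the paper's. There is, however, a genuine gap in your treatment of levels $n \neq 0$.

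You propose to split the contribution from $O_n$ into a ``fixed-point portion'' (handled by Theorem~\ref{main S^1-vanishing}) and a ``locally free portion'' (handled by the product formula with $L(n)$). But to separate these within the localization framework you need disjoint admissible neighborhoods of the fixed set and of the locally-free part of $\mu^{-1}(n)$, with an $S^1$-acyclic compatible system on the region between them. Your initial acyclic system lives only on $M \setminus \mu^{-1}(\Z)$, so non-fixed points of $\mu^{-1}(n)$ are not yet in the acyclic region; in particular Theorem~\ref{main S^1-vanishing} cannot be invoked, since it requires the acyclic structure on a full punctured neighborhood $M' \setminus M^{S^1}$, including points with $\mu = n$. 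And $\mu^{-1}(n)$ is in general connected and mixes fixed and non-fixed points (e.g.\ an interior integral level of a toric $\CP^2$), so no topological separation is available either.

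The paper closes this gap with a single observation (stated just before Lemma~\ref{localization} and used in its proof): for $n \neq 0$, every non-fixed point of $\mu^{-1}(n)$ is already $(L,S^1)$-acyclic, because by the computation in Proposition~\ref{non-L-acyclic=integral} the $S^1$-action on parallel sections over such an orbit has weight $n \neq 0$. Hence the $S^1$-acyclic system extends across the locally-free part of $\mu^{-1}(n)$, the local index at level $n$ becomes $\ind_K^{S^1}(V_n, V_n \setminus \mu^{-1}(n)^{S^1}, L)$, and Theorem~\ref{main S^1-vanishing} now applies directly. Your separate product-formula treatment of the locally free portion is then unnecessary: those points lie in the acyclic region and contribute nothing.

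One minor point on the inductive step: avoiding only the hyperplanes $\xi^\perp$ for $\xi \in \mu_G(M^G)$ is not sufficient. You must also choose $\mathfrak h$ outside the finitely many proper stabilizer Lie algebras so that $M^H = M^G$ (this is condition~(2) in Lemma~\ref{generic S^1}); otherwise an $H$-fixed point not fixed by $G$ could land on $\mu_H^{-1}(0)$. You should also verify, as the paper does, that $0$ remains a regular value for the induced $G'$-moment map on $M_H$.
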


As a special case we have a proof of Guillemin-Sternberg conjecture 
for the torus action.

\begin{theorem}
[\cite{Duistermaat-Guillemin-Meinrenken-Wu,Guillemin-Sternberg,Guillemin,Meinrenken1,Meinrenken2,Tian-Zhang,Vergne1,Vergne2}, etc.]
$$
RR^G(M,L)=RR(M_G,L_G)\in \Z. 
$$
\end{theorem}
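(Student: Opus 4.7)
The plan is to argue by induction on $n := \dim G$. The base case $n = 0$ is trivial, since then $M = M_G$ and $L = L_G$. For the inductive step, decompose $G = S^1 \times G'$ with $\dim G' = n-1$. The residual Hamiltonian $G'$-action on the partial reduction $M_{S^1} := \mu_{S^1}^{-1}(0)/S^1$ admits $0$ as a regular value of its moment map, and $(M_{S^1})_{G'} = M_G$, $(L_{S^1})_{G'} = L_G$. Because $G$-invariants can be taken in stages,
\[
RR_K^G(M,L) \;=\; \bigl(RR_{K\times G'}^{S^1}(M,L)\bigr)^{G'}.
\]
Assuming the single-$S^1$ case (which becomes the content of the theorem with $K$ enlarged to $K\times G'$), the right-hand side equals $RR_K^{G'}(M_{S^1}, L_{S^1})$, and the inductive hypothesis applied to the Hamiltonian $G'$-action on $M_{S^1}$ identifies this with $RR_K(M_G, L_G)$. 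The problem thus reduces to the case $G = S^1$.

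Assume $G = S^1$, with moment map $\mu$ for which $0$ is a regular value. Let $\Lambda := \mu(M) \cap \Z$, a finite set since $M$ is compact, and choose $\epsilon > 0$ so small that the closed intervals $[n-\epsilon, n+\epsilon]$ for $n \in \Lambda$ are pairwise disjoint and disjoint from $\Z \setminus \Lambda$. Set
\[
U_n := \mu^{-1}\bigl((n - \tfrac{1}{2}\epsilon,\, n + \tfrac{1}{2}\epsilon)\bigr) \quad (n \in \Lambda), \qquad V := \mu^{-1}\bigl(\R \setminus \textstyle\bigcup_{n \in \Lambda}[n - \tfrac{1}{3}\epsilon,\, n + \tfrac{1}{3}\epsilon]\bigr).
\]
On $V$ no point is $S^1$-fixed, and by a local version of Lemma~\ref{G-cohomology} (applied to the symplectic tubular-neighborhood model of each orbit) the acyclicity $H^*(S^1\cdot y, L|_{S^1\cdot y})^{S^1} = 0$ holds for every $y\in V$, since $\mu(y)\notin\Z$. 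The construction in Section~\ref{torus : compatible system} then produces a $K$-equivariant $S^1$-acyclic compatible system on $V$. Because $M$ is closed, Theorem~\ref{local index}\,(2) gives $RR_K^{S^1}(M, L) = \ind_K^{S^1}(M, V, W)$, and applying Theorem~\ref{equivariant localization for invariant part} to the pairwise disjoint, $S^1\times K$-invariant and admissible pieces $\{U_n\}_{n\in\Lambda}$ yields
\[
RR_K^{S^1}(M,L) \;=\; \sum_{n\in\Lambda} \ind_K^{S^1}\bigl(U_n,\, U_n\cap V,\, W|_{U_n}\bigr).
\]

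Each summand is evaluated by an equivariant symplectic tubular-neighborhood identification of $U_n$ with a local model, combined with the product formula of Subsection~\ref{product formula}. Away from the fixed-point set, the neck $U_n$ is an associated bundle over $\mu^{-1}(n)/S^1$ with fiber a neighborhood of the zero section in $T^*S^1$ and coefficient line bundle $L(n)$ as in Subsection~\ref{cotangent bundle case}; the fiber index is the weight-$n$ character by Proposition~\ref{index of T^*S^1}. For $n = 0$ this character is trivial, so the product formula produces precisely $RR_K(M_{S^1}, L_{S^1})$. For $n \neq 0$ the weight-$n$ character has vanishing $S^1$-invariant part, and any $S^1$-fixed component $F \subset \mu^{-1}(n)$ carries a non-trivial $S^1$-weight on $L|_F$ (since $\mu(F) = n \neq 0$) and so contributes $0$ by Theorem~\ref{main S^1-vanishing}. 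Summing, $RR_K^{S^1}(M,L) = RR_K(M_{S^1}, L_{S^1})$, completing the induction.

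The main obstacle is expected at a level $n \in \Lambda$ with $n \neq 0$ at which $\mu^{-1}(n)$ simultaneously contains $S^1$-fixed components and non-fixed regular points: there one must decompose $U_n$ into admissible, $S^1\times K$-invariant pieces, apply Theorem~\ref{main S^1-vanishing} on a tubular neighborhood of the fixed locus and the cotangent-bundle model of Subsection~\ref{cotangent bundle case} on the complementary neck, and glue via the product formula of Subsection~\ref{product formula}. Verifying that these local models are compatible with the global $S^1$-acyclic compatible system on $V$, and that their $S^1$-invariant contributions vanish in combination, is the delicate step; it is the mechanism that, within this localization framework, plays the role symplectic cutting plays in earlier approaches to $[Q,R]=0$.
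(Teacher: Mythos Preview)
Your overall architecture matches the paper's: induction on $\dim G$, reduction to $G=S^1$, localization to integer levels of $\mu$, the cotangent-bundle model at level $0$, and the vanishing theorem at the other levels. Two points, however, are genuine gaps rather than mere details.

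\medskip
\noindent\textbf{(1) The choice of circle in the inductive step.}
You write ``decompose $G=S^1\times G'$'' and then assert that $0$ is a regular value of the residual $G'$-moment map on $M_{S^1}$. But for an arbitrary splitting there is no reason for $0$ to be a regular value of $\mu_{S^1}$, so $M_{S^1}$ need not even be an orbifold. The paper handles this with Lemma~\ref{generic S^1}: one must choose a \emph{generic} one-parameter subgroup $H\subset G$ whose Lie algebra avoids all proper isotropy subalgebras and whose annihilator misses $\mu_G(M^G)$, so that $M^H=M^G$ and $0$ becomes a regular value of $\mu_H$. Only then does the reduction-in-stages argument go through.

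\medskip
\noindent\textbf{(2) Levels $n\neq 0$ in the $S^1$ case.}
The ``main obstacle'' you flag --- mixing fixed components and free orbits inside $\mu^{-1}(n)$ --- is not actually an obstacle, and your proposed resolution (splitting $U_n$ into a fixed-point piece and a cotangent-bundle neck) is both unnecessary and problematic, since the cotangent model requires $n$ to be a regular value. The paper's key observation, implicit in the proof of Proposition~\ref{non-L-acyclic=integral}, is that for $G=S^1$ a point $x\notin M^{S^1}$ with $\mu(x)\neq 0$ is always $(L,G)$-acyclic: a $G$-invariant parallel section would force $\mu(x)=0$. Consequently the $G$-acyclic compatible system (this is exactly where one needs the weaker $G$-acyclic notion rather than strong acyclicity) extends from $V_k\setminus\mu^{-1}(k)$ all the way to $V_k\setminus\mu^{-1}(k)^{G}$. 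The complement of the acyclic region in $V_k$ is then precisely the fixed-point set, and Theorem~\ref{main S^1-vanishing} gives
\[
\ind_K^{S^1}\bigl(V_k,\;V_k\setminus\mu^{-1}(k)^{G},\;W\bigr)=0
\]
in one stroke, with no neck analysis required. This is the content of Lemma~\ref{localization}.
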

\begin{remark}
The Guillemin-Sternberg conjecture itself is valid not only for a torus but also for a compact Lie group. 
\end{remark}

\subsection{Acyclic compatible system and local Riemann-Roch number}
\begin{definition}[$L$-acyclic point and $(L,G)$-acyclic point]
A point $\xi\in \mu_G(M)$ is called {\it $L$-acyclic} if 
the restriction of $(L,\nabla)$ to each orbit in $\mu_G^{-1}(\xi)$ 
does not have any non-trivial parallel sections. 
If the restriction does not have any 
non-trivial $G$-invariant parallel sections, then we call $\xi$ a 
{\it $(L,G)$-acyclic point}. 

\end{definition}

\begin{remark}
Since each $G$-orbit is an isotropic torus, by 
\cite[Lemma~5.12]{Fujita-Furuta-Yoshida2} and the Hodge theory, a point $\eta\in \mu_G(M)$ is $L$-acyclic if and only if, for each orbit ${\mathcal O}$ which is contained in $\mu^{-1}_G(\eta)$, the de Rham operator on ${\mathcal O}$ with coefficients in $L|_{\mathcal O}$ has zero kernel. 
As in the similar way, 
$\eta\in \mu_G(M)$ is $(L,G)$-acyclic if and only if 
the $G$-invariant part of the de Rham operator on the orbit is trivial.  

\end{remark}
Let $\g_{\Z}$ be the integral lattice and $\g^*_\Z$ the weight lattice of $G$. 
\begin{proposition}\label{non-L-acyclic=integral}
Non $L$-acyclic points are contained in $\g^*_\Z$. 
\end{proposition}
\begin{proof}
First let us recall that the moment map associated to the lift of the $G$-action on $(M,\omega)$ to $(L,\nabla)$ is defined by the following equality
\begin{equation}\label{char of mu_G}
\dfrac{d}{dt}\Big|_{t=0}\psi_{\exp -t\xi}\circ s\circ \varphi_{\exp t\xi}=\nabla_{X_\xi}s-2\pi \sqrt{-1}\< \mu_G ,\xi\>s
\end{equation}
for $\xi \in \g$ and $s\in \Gamma (L)$, where $\varphi_g$ denotes the $G$-action on $M$ and $\psi_g$ denotes the lift of $\varphi_g$ to $L$ for $g\in G$. 

Let $\eta^* \in \g^*$ be a non $L$-acyclic point and ${\mathcal O}$ a non $L$-acyclic orbit that is contained in $\mu_G^{-1}(\eta^*)$. Then, there exists a non-trivial parallel section which we denote by $s\in \Gamma (L|_{\mathcal O})$. For arbitrary element $\xi \in \g_{\Z}$ and $x\in {\mathcal O}$, by \eqref{char of mu_G} we have
\[
\dfrac{d}{dt}\psi_{\exp -t\xi}\circ s\circ \varphi_{\exp t\xi}(x)=-2\pi \sqrt{-1}\< \eta^* ,\xi\>\psi_{\exp -t\xi}\circ s\circ \varphi_{\exp t\xi}(x). 
\]
This implies that 
\begin{equation*}
\psi_{\exp -t\xi}\circ s\circ \varphi_{\exp t\xi}(x)=e^{-2\pi\sqrt{-1}t\< \eta^*,\xi\>}s(x) .
\end{equation*}
Since $\xi \in \g_{\Z}$, by putting $t=1$, we have 
\[
s(x)=\psi_{\exp -\xi}\circ s\circ \varphi_{\exp \xi}(x)=e^{-2\pi\sqrt{-1}\< \eta^*,\xi\>}s(x) .
\]
Then, $\< \eta^*,\xi\>$ must be in $\Z$. 
\end{proof}


\begin{remark}
There exist the following three conditions which are related to acyclicity on 
a $G$-invariant open subset $V'$ of $M$. 
\begin{itemize}
\item[(a)] Every point in $\mu_G(V')$ is $L$-acyclic.
\item[(b)] $\mu_G(V')$ does not contain any integral points. 
\item[(c)] The compatible system on $V'$ is acyclic.
\end{itemize}
Neither the condition (a) nor (c) implies (b) because 
there may exist a lattice point
without non-zero parallel section on its fiber. 
The condition (b) implies (a) by Proposition~\ref{non-L-acyclic=integral}. 
The condition (c) implies (a) because if every $H^{\perp}$-orbit does not 
have any non-zero parallel sections, then every $G$-orbit does not have. 
Conversely neither the condition (a) nor (b) implies (c).  
%
When $G=S^1$, the conditions (a) and (c) are equivalent because the index set $A$ of the compatible fibration consists of the single element $\{ e\}$.  
In this case, since the action of $G$ on $H^*(G,\C)$ is trivial,
two conditions (a') $(L,G)$-acyclicity and (c') $G$-acyclicity are also equivalent. 
Note that only the condition (c) (resp. (c')) 
is our sufficient condition to define the index (resp. $G$-invariant index), 
and we only use it.   
\end{remark}

Let $V$ be a $G\times K$-invariant open subset of $M$. 
We fix a $G\times K$-invariant $\omega$-compatible 
almost complex structure on $V$. 
As in Subsection~\ref{torus : compatible fibration} 
$V$ has a structure of $G\times K$-equivariant 
compatible fibration with the open covering $\{V_H\}_{H\in A}$ 
parameterized by the set of subgroups of $G$ which appear as 
the identity components of the stabilizers of the $G$-action. 
Consider the $\Z/2$-graded Clifford module bundle 
$W_L=\wedge^{0,\bullet}T^*V\otimes L|_V$.  
Suppose that there is a $G\times K$-invariant open subset $V'$ of $V$ 
such that the family of Dirac type operators along leaves $\{D_H\}$ 
constructed as in Subsection~\ref{torus : compatible system} 
defines a $G\times K$-equivariant 
acyclic (resp. $G$-acyclic) compatible system. 
Using these data we have the equivariant local index 
${\rm ind}_{G\times K}(V,V',W_L)\in R(G\times K)$ 
(resp. ${\rm ind}_{K}^G(V,V',W_L)\in R(K)$). 

\begin{definition}[Equivariant Riemann-Roch number]
For the above data we define the {\it equivariant Riemann-Roch number} 
$RR_{G\times K}(V,V',L)\in R(G\times K)$ by putting 
$RR_{G\times K}(V,V',L)={\rm ind}_{G\times K}(V,V',W_L)$. 
As in the similar way when $\{D_{\alpha}\}$ is $G$-acyclic 
the {\it $G$-invariant part of the Riemann-Roch number} 
$RR_K^G(V,V',L)\in R(K)$ is defined by
$RR_{K}^G(V,V',L)={\rm ind}_{K}^G(V,V',W_L)$. 
\end{definition}

\subsection{Proof of Theorem~\ref{equiv[Q,R]=0}: $S^1$-case}
\label{Proof of S^1-case}
We first show Theorem~\ref{equiv[Q,R]=0} in the case of $\dim G=1$. 
%
For each lattice point $k\in \g_{\Z}^*$ we can take a small 
$G\times K$-invariant open neighborhood $V_k$ 
of the compact set $\mu_G^{-1}(k)$
so that the image of the complement $\mu_G(V_k\setminus \mu^{-1}_G(k))$ 
consists of $L$-acyclic points. 
The proof of Proposition~\ref{non-L-acyclic=integral} implies that 
if $k$ is not equal to $0$, then the image of the complement of the fixed point set 
$\mu_G(V_k\setminus \mu^{-1}_G(k)^{G})$ consists of 
$(L,G)$-acyclic points. 

\begin{lemma}\label{localization}
$$
RR_{K}^{G}(M,L)=
RR_{K}^{G}(V_0,V_0\setminus\mu^{-1}_G(0),L|_{V_0})\in R(K). 
$$
\end{lemma}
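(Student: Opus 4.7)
The plan is to apply the equivariant localization formula of Theorem~\ref{equivariant localization for invariant part} to reduce $RR_K^G(M,L)$ to a finite sum of local contributions indexed by non-$L$-acyclic values of $\mu_G$, and then to use the $S^1$-vanishing theorem (Theorem~\ref{main S^1-vanishing}) to kill every contribution except the one at $0$.

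By Proposition~\ref{non-L-acyclic=integral} the non-$L$-acyclic points of $\mu_G(M)$ lie in the discrete lattice $\g_{\Z}^*$, and since $M$ is closed there are only finitely many of them, say $k_0=0,k_1,\dots,k_N$. I would shrink the given $V_{k_i}$ so that they become mutually disjoint admissible $G\times K$-invariant open neighborhoods of the closed sets $\mu_G^{-1}(k_i)$, and set $V:=M\setminus\bigsqcup_{i}\mu_G^{-1}(k_i)$, which is open with compact complement. On $V$ the moment-map image avoids all non-$L$-acyclic lattice points, so the natural compatible system of Section~\ref{The case of torus action} is acyclic there; the equivalence of conditions $(1)$ and $(3)$ valid for $G=S^1$ (noted in the remark after Proposition~\ref{non-L-acyclic=integral}) upgrades this to $G$-acyclicity on $V$. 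Applying Theorem~\ref{equivariant localization for invariant part} with $O_i=V_{k_i}$, together with property~(2) of Theorem~\ref{local index} which identifies $\ind_K^G(M,V,W_L)=RR_K^G(M,L)$ for the closed manifold $M$, yields
$$
RR_K^G(M,L)=\sum_{i=0}^{N}RR_K^G\bigl(V_{k_i},V_{k_i}\setminus\mu_G^{-1}(k_i),L|_{V_{k_i}}\bigr).
$$

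Next I would show that for each $k_i\neq 0$ the corresponding term vanishes. By the paragraph preceding the lemma, $\mu_G(V_{k_i}\setminus\mu_G^{-1}(k_i)^G)$ consists of $(L,G)$-acyclic points, so, using this time the equivalence of $(1)'$ and $(3)'$ for $G=S^1$, the compatible system is $G$-acyclic on the larger admissible open set $V_{k_i}\setminus\mu_G^{-1}(k_i)^G$. Property~(3) of Theorem~\ref{local index} then gives
$$
RR_K^G\bigl(V_{k_i},V_{k_i}\setminus\mu_G^{-1}(k_i),L|_{V_{k_i}}\bigr)=RR_K^G\bigl(V_{k_i},V_{k_i}\setminus\mu_G^{-1}(k_i)^G,L|_{V_{k_i}}\bigr).
$$
The set $\mu_G^{-1}(k_i)^G=\mu_G^{-1}(k_i)\cap M^{S^1}$ is a disjoint union of closed connected components of the fixed-point manifold $M^{S^1}$, and at any $x$ in it equation~\eqref{char of mu_G} specializes to $\tfrac{d}{dt}\big|_{t=0}\psi_{\exp(-t\xi)}s(x)=-2\pi\sqrt{-1}\langle k_i,\xi\rangle s(x)$, so $S^1$ acts on the fiber $L_x$ with weight proportional to $k_i\neq 0$. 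This realizes the hypothesis $L^{S^1}=\{\text{zero section}\}$ of Theorem~\ref{main S^1-vanishing}, which applied to a small $S^1\times K$-invariant neighborhood of each connected component inside $V_{k_i}$ gives the vanishing of every summand with $i\neq 0$. Only the $i=0$ term then survives, which is the claim.

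The main obstacle I anticipate is the bookkeeping in the second paragraph: explicitly verifying that the enlargement of the admissible acyclic region from $V_{k_i}\setminus\mu_G^{-1}(k_i)$ to $V_{k_i}\setminus\mu_G^{-1}(k_i)^G$ preserves the hypotheses of Theorem~\ref{local index}, and that Theorem~\ref{main S^1-vanishing}, whose statement is local around a single closed connected $S^1$-fixed submanifold, may be applied component by component to the possibly disconnected intersection $\mu_G^{-1}(k_i)\cap M^{S^1}$ sitting inside a single $V_{k_i}$. Everything else is a clean combination of localization with the moment-map identification of the $S^1$-weight on $L|_{M^{S^1}}$.
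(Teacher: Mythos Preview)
Your proposal is correct and follows essentially the same route as the paper: localize via Theorem~\ref{equivariant localization for invariant part} to neighborhoods of the lattice fibers, enlarge the acyclic region at each $k\neq 0$ from $V_k\setminus\mu_G^{-1}(k)$ to $V_k\setminus\mu_G^{-1}(k)^G$ using $(L,G)$-acyclicity, and then kill those terms with Theorem~\ref{main S^1-vanishing}. The paper compresses your two-step passage (first localize, then enlarge) into one line, but the logic is the same; your explicit invocation of property~(3) of Theorem~\ref{local index} and of the equivalences $(1)\Leftrightarrow(3)$ and $(1)'\Leftrightarrow(3)'$ for $G=S^1$ makes the argument cleaner. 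One cosmetic point: you label $0$ as one of the non-$L$-acyclic values without justification; it is simpler (and what the paper does) to index by \emph{all} lattice points in $\mu_G(M)$, which avoids the question.
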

\begin{proof}
By Proposition~\ref{non-L-acyclic=integral} 
the complement $M\setminus \mu_G^{-1}(\g_{\Z}^*)$ has a structure of 
$G\times K$-equivariant acyclic compatible system and 
$RR_{G\times K}(M,L)$ is equal to the sum of 
equivariant local Riemann-Roch numbers 
$RR_{G\times K}(V_k, V_k\setminus \mu^{-1}_G(k), L|_{V_k})$. 
Moreover if $k\neq 0$, then $V_k\setminus \mu^{-1}_G(k)^G$ has  
a structure of $K$-equivariant $G$-acyclic compatible system 
we have the equality for the $G$-invariant part, 
$$
RR_K^{G}(M,L)=RR_K^{G}(V_0,V_0\setminus\mu^{-1}_G(0), L|_{V_0})
+\sum_{k\in \g_{\Z}^*\setminus\{0\}}
RR_K^{G}(V_k,V_k\setminus\mu_G^{-1}(k)^{G}, L|_{V_k}).
$$
On the other hand we have 
$$
RR_K^{G}(V_k,V_k\setminus\mu_G^{-1}(k)^{G},L|_{V_k})=0 
$$for $k\neq 0$ by Theorem~\ref{main S^1-vanishing}, 
and hence, we complete the proof.  
\end{proof}

Now we identify the neighborhood $V_0$ of $\mu^{-1}_G(0)$. 
Let $T^*G$ be the cotangent bundle of $G$ with 
the prequantizing line bundle $L(0)$ as in Subsection~\ref{cotangent bundle case}, where $0\in \g_{\Z}^*$ corresponds to 
the one dimensional trivial representation of $G$.

\begin{lemma}\label{local model}
There is a $G\times K$-equivariant symplectomorphism between 
small open neighborhoods of $\mu^{-1}_G(0)$ in $M$ and 
the zero section in $\mu^{-1}_G(0)\times_{G}T^*G(\cong 
\mu^{-1}_G(0)\times\g^*)$. 
Moreover this symplectomorphism lifts to 
a $G\times K$-equivariant isomorphism between 
prequantizing line bundles over them. 
\end{lemma}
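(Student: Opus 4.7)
The plan is to deduce Lemma~\ref{local model} from the equivariant coisotropic neighborhood theorem together with a uniqueness statement for equivariant prequantizations on a neighborhood of a level set.

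First I would observe that $Z := \mu_G^{-1}(0)$ is a $G\times K$-invariant coisotropic submanifold of $M$, since $0$ is a regular value of $\mu_G$. The symplectic orthogonal $TZ^{\omega}$ is spanned by the fundamental vector fields of the $G$-action, so the null (characteristic) foliation of $\omega|_Z$ coincides with the $G$-orbit foliation; in particular the $G$-action on $Z$ is locally free and each $G$-orbit is isotropic, consistent with the setup of the paper. The symplectic reduction at $0$ therefore makes sense and equals $M_G = Z/G$. On the model side, under the trivialization $T^*G \cong G\times \g^*$, a tubular neighborhood of the zero section in $Z\times_G T^*G$ is diffeomorphic to $Z\times (\text{small nbhd of }0\text{ in }\g^*)$, and carries a canonical symplectic form whose restriction to $Z = Z\times\{0\}$ equals $\omega|_Z$ and whose transverse part is the pull-back of the canonical form on $T^*G$.

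Next I would invoke the $G\times K$-equivariant version of Weinstein's coisotropic embedding theorem to produce a $G\times K$-equivariant symplectomorphism between neighborhoods of $Z$ in $M$ and in $Z\times_G T^*G$: both spaces are symplectic manifolds containing $Z$ as a coisotropic submanifold with the same induced Hamiltonian $G\times K$-action, so the usual Moser-type argument (applied to the linear interpolation of the two symplectic forms, and averaged over $G\times K$ to preserve equivariance) yields the required diffeomorphism. Uniqueness of the local normal form here is standard; the $K$-equivariance is obtained for free by averaging, and $G$-equivariance is built into the coisotropic normal form since the $G$-orbits are precisely the characteristic leaves.

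For the lift to the prequantizing line bundle, I would note that the restrictions of both prequantum bundles $(L,\nabla)|_Z$ and (the model bundle pulled back to $Z$) descend to the same prequantum bundle $(L_G,\nabla_G)$ on $M_G$, so they are canonically $G\times K$-equivariantly isomorphic over $Z$. Extending this isomorphism over a neighborhood proceeds by another Moser/gauge argument: pull back both bundles with connection to the same underlying manifold via the symplectomorphism; their curvatures agree (both equal $\omega$) and they are $G\times K$-equivariantly isomorphic on $Z$, so their difference is governed by a $G\times K$-invariant closed $1$-form vanishing on $Z$, which by the equivariant relative Poincar\'e lemma is $df$ for a $G\times K$-invariant function $f$. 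The gauge transformation $e^{\sqrt{-1}f}$ then produces the desired equivariant isomorphism of prequantum bundles, automatically respecting the moment maps because both sides induce the same moment map on $Z$.

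The main obstacle I anticipate is ensuring that all the Moser-type deformations and gauge adjustments can be carried out simultaneously $G\times K$-equivariantly; this is handled in a routine way by averaging over the compact group $G\times K$ at each step and by using that the normal bundle of $Z$ can be given a $G\times K$-invariant complement to $TZ$. Once equivariance of each ingredient is checked, the two halves of the statement (symplectic model and line-bundle lift) assemble immediately.
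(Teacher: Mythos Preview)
Your argument is correct, and the overall strategy (equivariant Moser deformation near the coisotropic level set $Z=\mu_G^{-1}(0)$) is the same as the paper's. The difference lies in how the prequantum line bundle is handled. You do it in two stages: first obtain the symplectomorphism via the equivariant coisotropic neighborhood theorem, then separately match the two prequantum bundles by a gauge transformation coming from the relative Poincar\'e lemma. The paper instead passes to the associated principal $U(1)$-bundles $(P_i,\alpha_i)$, which are contact manifolds, and runs a single $(G\times K)\times U(1)$-equivariant Moser argument for the contact forms near the preimage of $Z$; descending back to the base yields the symplectomorphism and the line-bundle isomorphism simultaneously. Your two-step approach is perhaps more familiar from the symplectic literature and makes the role of the prequantum data transparent, while the paper's one-step contact argument is cleaner in that it avoids having to extend the bundle isomorphism off $Z$ and then separately correct the connection. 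Either way the equivariance is obtained by averaging over the compact group, exactly as you note.
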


This lemma follows from the following general proposition, 
which is a generalization of Darboux's theorem 
(\cite{McDuffSalamon}). 

\begin{proposition}
Let $H$ be a compact Lie group 
acting on symplectic manifolds $(M_i,\omega_i)$ for $i=0,1$. 
We assume each $(M_i,\omega_i)$ has a $H$-equivariant 
prequantizing line bundle $(L_i,\nabla_i)$ . 
Suppose that there is a compact manifold $N$ 
with the following properties. 
\begin{enumerate}
\item There is an embedding $\iota_i:N\hookrightarrow M_i$. 
\item Normal bundles $\nu_{\iota_i(N)}$ of $\iota_i(N)$ in $M_i$ are 
$H$-equivariantly isomorphic to each other. 
\item The collections of data $\iota_i^*(\omega_i,L_i,\nabla_i)$ 
are $H$-equivariantly isomorphic. 
\end{enumerate}
Then there is a $H$-invariant neighborhood $U_i$ of $\iota_i(N)$ and 
$H$-equivariant diffeomorphism $\phi:U_1\to U_2$ such that 
\begin{enumerate}
\item The following diagram commutes. 
\[
\xymatrix{
U_1 \ar[rr]^{\phi} && U_2  \\ 
& N \ar[lu]^{\iota_1} \ar[ur]_{\iota_2} &
}\]
\item The diffeomorphism $\phi$ lifts to a $H$-equivariant isomorphisms 
between $(\omega_i,L_i,\nabla_i)$. 
\end{enumerate}
\end{proposition}

\begin{proof}
For $i=0,1$ consider the associated principal $U(1)$-bundle with connection 
1-form $(P_i,\alpha_i)$ of $(L_i,\nabla_i)$. 
Then the proposition follows from $G\times U(1)$-equivariant 
Morser's argument for contact manifold $P_i$ with contact 1-form $\alpha_i$ for 
submanifold $\iota_i(N)$. 
Here the $U(1)$-action comes from that of the 
structure of a principal bundle. 
\end{proof}

Note that $\mu^{-1}_G(0)\to \mu^{-1}_G(0)/G=M_G$ has a structure of a 
principal $G$-bundle (in the sense of orbifold), 
and $\mu^{-1}_G(0)\times_G T^*G$ has the structure of a product of 
compatible fibrations in the sense of Subsection~\ref{product formula}. 
Here the compatible fibration on $M_G$ is the trivial one 
(all leaves are a point) 
and the one on $T^*G$ is induced by the induced $G$-action. 
Since the symplectomorphism in Lemma~\ref{local model} is $G$-equivariant 
and preserves the prequantizing line bundles, we have the following. 

\begin{lemma}\label{local model-acyclic system}
The symplectomorphism in Lemma~\ref{local model} 
induces an isomorphism between two acyclic compatible systems. 
\end{lemma}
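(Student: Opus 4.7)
The plan is to check that $\phi$ transports the data defining the compatible system on $\mu^{-1}_G(0) \times_G T^*G$ to data equivalent to the compatible system built on $M$ in Section~\ref{The case of torus action}, in a sense sufficient for the subsequent use in Lemma~\ref{localization} (that is, preserving $G$-acyclicity and the leafwise Dirac operators up to continuous deformation).

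First, I would argue that the compatible fibrations agree. Since $0$ is a regular value of $\mu_G$, on a small enough $G\times K$-invariant neighborhood of $\mu_G^{-1}(0)$ every stabilizer has trivial identity component, so on both sides the index set $A$ from Section~\ref{torus : compatible fibration} consists only of $\{e\}$ and the good open covering collapses to a single open set with the foliation by $G$-orbits. Because $\phi$ is $G$-equivariant it maps $G$-orbits to $G$-orbits, so the foliation by $G$-orbits on one side pulls back to the foliation by $G$-orbits on the other.

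Second, I would verify that the leafwise Dirac operators match. The operator $D_{\{e\}}$ on each side is the de Rham operator along $G$-orbits with coefficients in $L|_{G\text{-orbit}}$, determined entirely by the $G$-action together with the connection $\nabla$ along orbits (the Hermitian structure on the normal directions enters only through the flat identification needed to build $W_L$). Since Lemma~\ref{local model} provides a $G\times K$-equivariant isomorphism of the prequantizing line bundles with connection, these leafwise operators are identified by $\phi$.

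The remaining issue, and the only real obstacle, is that the $\omega$-compatible $G\times K$-invariant almost complex structures chosen on $M$ and on $\mu^{-1}_G(0)\times_G T^*G$ respectively need not match under $\phi$; they induce two possibly distinct Clifford module bundles $W_L=\wedge^{0,\bullet}T^*V\otimes L$. To handle this I would connect the two almost complex structures by a smooth path of $\omega$-compatible $G\times K$-invariant almost complex structures, using the fact that the space of such structures is contractible. Along this path the flat connection of $L$ along $G$-orbits is unchanged, so the leafwise Dirac operator stays acyclic on each nontrivial $G$-orbit; hence the deformation yields a continuous family of $K$-equivariant $G$-acyclic compatible systems. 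Invoking the deformation invariance in Theorem~\ref{local index} (together with its equivariant refinements from Subsections~\ref{ind_K(M,V)} and \ref{G-acyclic system and invariant part}) completes the identification.
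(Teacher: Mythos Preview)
Your argument is correct and follows the same line as the paper, which justifies the lemma in a single sentence preceding its statement: ``Since the symplectomorphism in Lemma~\ref{local model} is $G$-equivariant and preserves the prequantizing line bundles, we have the following.'' Your three steps unpack exactly this observation---$G$-equivariance forces the orbit foliations to match, and preservation of $(L,\nabla)$ forces the leafwise de Rham operators to match---while your third step (connecting the two $\omega$-compatible almost complex structures through the contractible space of $G\times K$-invariant ones and invoking deformation invariance) makes explicit a point the paper leaves implicit.
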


We take $V_0$ to be the open 
neighborhood as in Lemma~\ref{local model}. 
Then by Proposition~\ref{index of T^*S^1} and 
the product formula (Proposition~\ref{product formula for fixed point set}), we have 
$$
RR_K^G(V_0,V_0\setminus \mu_G^{-1}(0),L|_{V_0})=
RR_K(M_G, L_G) 
$$and hence by Lemma~\ref{localization} we complete the proof of Theorem~\ref{equiv[Q,R]=0} in the case of $\dim G=1$. 

\begin{remark}
The argument in this subsection 
implies that Theorem~\ref{equiv[Q,R]=0} 
holds in the case that $M$ is not necessarily closed. 
Let $(M,\omega)$ be a prequantized symplectic manifold 
with prequantizing line bundle $(L,\nabla)$. 
We do not assume that $M$ is closed. 
Let $G$ be the circle group $S^1$ and $K$ a compact Lie group. 
Suppose that $G\times K$ acts effectively on $(M,\omega)$ 
and the $G\times K$-action on $M$ lifts to $L$ 
preserving  $\nabla$ and the Hermitian metric of $L$. 
We assume that the corresponding moment map $\mu_G$ of the $G$-action 
is a proper map and $0$ is its regular value. 
Suppose that there is a $G$-invariant open subset $V$ of $M$ such that 
the complement $M\setminus V$ is a compact neighborhood of $\mu_G^{-1}(0)$ 
and the image $\mu_G(M\setminus V)$ 
does not contain any non-zero integral point. 
Under the above assumptions we have the equivariant index 
$RR_{G\times K}(M,V,L)$. 
As in the same way in the proof of Lemma~\ref{localization} 
and by Lemma~\ref{local model-acyclic system}, 
we have  
$$
RR_K^G(M,V,L)=RR_K(M_G,L_G), 
$$where $M_G$ is the symplectic reduction of $M$ at 0 and 
$L_G$ is the induced prequantizing line bundle on $M_G$. 
Note that since $\mu_G$ is proper $M_G$ is a closed symplectic orbifold. 


On the other hand, when $M$ is not necessarily compact, Vergne 
\cite{Vergne3} formulated a 
version of quantization conjecture for general compact Lie group $G$
in terms of transversally elliptic operator \cite{Atiyah1}. 
The conjecture was proved by Ma and Zhang \cite{Ma-Zhang}, and  
Paradan \cite{Paradan2} gave a new proof of it. 
In \cite{Hajime} the author gave a formulation of a variant of $S^1$-equivariant index for non-compact symplectic manifold in the context of this paper. 
The $S^1$-equivariant index coincides with the transverse index for proper moment map without critical points. 
If the moment map is not proper, then, they however do not coincide in general. 
See \cite{Hajime} for details. 
\end{remark}

\subsection{Proof of Theorem~\ref{equiv[Q,R]=0}: general case}
To show Theorem~\ref{equiv[Q,R]=0} by induction on $\dim G$ 
we have to choose an appropriate circle subgroup as follows. 
\begin{lemma}\label{generic S^1}
There exists a compact connected 1 dimensional subgroup 
$H$ of $G$ such that the induced moment map 
$\mu_H=\iota_H^*\circ\mu_G:M\to \h^*$ has $0$ as its regular value, 
where ${\mathfrak h}$ is the Lie algebra of $H$ and 
$\iota_H:{\mathfrak h}\to \g$ is the natural inclusion. 
\end{lemma}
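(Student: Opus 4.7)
The plan is to parametrize candidate 1-dimensional subgroups $H\subset G$ by their infinitesimal generators $\eta\in\g_{\Z}\setminus\{0\}$, and then identify the induced moment map $\mu_H$ with the function $x\mapsto\langle\mu_G(x),\eta\rangle$ on $M$. The regular-value condition at $0\in\h^*$ will translate into a finite-codimensional genericity condition on $\eta$, which can then be satisfied by an integral vector because $\g_{\Z}\otimes\Q$ is dense in $\g$.

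First I would recall that at any $x\in M$ the image of $d\mu_G|_x$ in $\g^*$ is precisely the annihilator of the stabilizer Lie algebra $\g_x\subset\g$. Hence $d\mu_H|_x=\iota_H^*\circ d\mu_G|_x$ is surjective onto $\h^*$ if and only if $\eta\notin\g_x$. Consequently $0$ fails to be a regular value of $\mu_H$ exactly when $\eta$ lies in the ``bad'' set
\[
B:=\bigcup_{x\in M}\{\,\eta\in\g_x:\langle\mu_G(x),\eta\rangle=0\,\}\subset\g.
\]

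Next I would use the orbit-type stratification (valid since $M$ is compact and $G$ is a compact Lie group) to enumerate the finitely many distinct stabilizer Lie algebras $\g_1,\ldots,\g_m$ that occur on $M$. Each proper $\g_i\subsetneq\g$ contributes to $B$ a subset of $\g_i$, hence a proper linear subspace of $\g$. The remaining case is $\g_i=\g$, corresponding to the fixed-point set $M^G$. On $M^G$ one has $d\mu_G=0$, so $\mu_G|_{M^G}$ is locally constant and takes finitely many values; each such value is nonzero, for if a fixed point satisfied $\mu_G(x)=0$ then $d\mu_G|_x=0$ would contradict the hypothesis that $0$ is a regular value of $\mu_G$. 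The contribution of $M^G$ to $B$ is therefore the union of finitely many hyperplanes $\ker\mu_G(x)\subset\g$, one for each component.

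Combining these observations, $B$ is contained in a finite union of proper real linear subspaces of $\g$. Since $\g_{\Z}\otimes\Q$ is a dense $\Q$-form of $\g$, and a finite union of proper subspaces of a $\Q$-vector space cannot exhaust it, I can choose a nonzero $\eta\in\g_{\Z}\setminus B$. Then $H:=\overline{\exp(\R\eta)}$ is a compact connected 1-dimensional subgroup of $G$, and by construction $d\mu_H$ is surjective at every point of $\mu_H^{-1}(0)$, so $0$ is a regular value of $\mu_H$. The only place the argument requires care is the fixed-point stratum $M^G$, where the naive ``$\eta$ avoids $\g_x$'' condition degenerates because $\g_x=\g$; the regular-value hypothesis on $\mu_G$ is exactly what converts this degenerate case into the finitely many hyperplane constraints needed to complete the rational-approximation step.
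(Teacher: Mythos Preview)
Your proof is correct and follows essentially the same approach as the paper's. Both arguments identify the ``bad'' set of directions $\eta$ as a finite union of proper linear subspaces of $\g$---the proper stabilizer Lie algebras together with the finitely many hyperplanes $\ker\mu_G(x)$ for $x\in M^G$---and then use the density of rational directions to pick an integral $\eta$ outside this set. Your treatment is slightly more explicit in justifying why $0\notin\mu_G(M^G)$ (via the regular-value hypothesis on $\mu_G$), which the paper states without comment; otherwise the two arguments coincide.
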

\begin{proof}
Let $A$ be the set of all subgroups of $G$ 
which appear as a stabilizer of $G$-action on $M$. 
Since $M$ is compact 
$A$ is a finite set 
and the image of the fixed point set $\mu_G(M^G)$ is a finite set 
which does not contain $0$, and  
we can choose a one-dimensional subspace ${\mathfrak h}$ in 
${\mathfrak g}$ which satisfies the following conditions. 
\begin{enumerate}
\item ${\mathfrak h}$ is generated by rational vectors. 
\item ${\mathfrak h}\setminus\{0\}$ does not intersect with 
$\displaystyle\bigcup_{G'\in A\setminus \{G\}}\text{Lie}~G'$ 
in $\g$. 
\item $\mu_G(M^G)$ does not intersect with $\ker\iota^*_H$. 
\end{enumerate}
We define $H$ as the one-dimensional connected subgroup of $G$ which 
is defined as the image of ${\mathfrak h}$ by the exponential map. 
Note that by the above first condition $H$ is a compact subgroup of $G$,  
and we have $M^H=M^G$ from the second condition. 
Since $H$ is a one-dimensional subgroup 
$x\in M$ is a critical point of $\mu_H=\iota^*_H\circ\mu_G$ 
if and only if it is a fixed point $x\in M^H=M^G$.
By the last condition for ${\mathfrak h}$, 
we have that $0$ is a regular value of $\mu_H$. 
\end{proof}

\begin{proof}[Proof of Theorem~\ref{equiv[Q,R]=0}]
We show the theorem by induction on $\dim G$. 
As in the previous subsection we proved in the case of $\dim G=1$. 
We take a one-dimensional connected subgroup $H$ of $G$ 
as in Lemma~\ref{generic S^1} and a complementary subtorus $G'$. 
According to this decomposition we have a decomposition of the moment map 
$\mu_G$ as $\mu_G=\mu_H\oplus\mu_{G'}:M\to \g^*={\mathfrak h}^*\oplus(\g')^*$. 
Let $\bar\mu_{G'}:\mu_H^{-1}(0)/H\to({\mathfrak g}')^* $ 
be the induced moment map with respect to the induced $G'$-action. 
Since the natural projection 
$\mu^{-1}_G(0)=(\mu_{G'}|_{\mu^{-1}_H(0)})^{-1}(0)\to\bar\mu_{G'}^{-1}(0)$ 
is a submersion and $0$ is a regular value of $\mu_G$ and $\mu_H$, 
$0$ is also a regular value of $\bar\mu_{G'}$. 
Then we can prove Theorem~\ref{equiv[Q,R]=0} inductively as follows: 
\begin{eqnarray*}
RR_K^G(M,L)&=&(RR_{K}^{H}(M,L))^{G'} \\
&=&(RR_K(M_H,L_H))^{G'} \\ 
&=&RR_K((M_H)_{G'}, ({L_H})_{G'}) \\ 
&=&RR_K(M_G,L_G).   
\end{eqnarray*} 
Here the second equality follows from the facts
that $H$ is one-dimensional and $0$ is 
a regular value of $\mu_H$, 
and the third equality follows from the facts that 
$G'$ is $m-1$-dimensional and $0$ is a regular value of $\bar\mu_{G'}$. 
\end{proof}

\noindent{\bf Acknowledgements.}
The first author is partly supported 
by Grant-in-Aid for Young Scientists (Start-up) 21840045 and Grant-in-Aid for Young Scientists (B) 23740059.
The second author is
partly supported by Grant-in-Aid for Scientific 
Research (A) 19204003 and Grant-in-Aid for Scientific Research (B) 19340015.
The third author is partly supported by Grant-in-Aid 
for Young Scientists (B) 22740046, and Fujyukai Foundation.
\bibliographystyle{amsplain}
\bibliography{reference}
\end{document}